\numberwithin{equation}{section}
\numberwithin{figure}{section}
\let\cal\mathcal
\def\Rscr{{\cal R}}
\def\Sscr{{\cal S}}
\let\blb\mathbb
\def \ZZ{{\blb Z}}
\def \RR{{\blb R}}
\def\Hom{\operatorname {Hom}}
\def\r{\rightarrow}
\newtheorem{lemma}{Lemma}[section]
\newtheorem{proposition}[lemma]{Proposition}
\newtheorem{theorem}[lemma]{Theorem}
\newtheorem{corollary}[lemma]{Corollary}
\newtheorem*{remark*}{Remark}
\theoremstyle{definition}
\newtheorem{example}[lemma]{Example}
\newtheorem{definition}[lemma]{Definition}
\theoremstyle{remark}
\newtheorem{remark}[lemma]{Remark}
\newdimen\uboxsep \uboxsep=1ex
\def\uboxn#1{\vtop to 0pt{\hrule height 0pt depth 0pt\vskip\uboxsep
\hbox to 0pt{\hss #1\hss}\vss}}
\def\uboxs#1{\vbox to 0pt{\vss\hbox to 0pt{\hss #1\hss}
\vskip\uboxsep\hrule height 0pt depth 0pt}}
\subjclass{20F36}
\title{Subgroups of braid groups generated by Birman-Ko-Lee generators}
\author{Anya Nordskova}
\author{Michel Van den Bergh}
\thanks{The second author is a senior researcher at the Research
  Foundation Flanders (FWO). This project has received funding from the European Research Council (ERC) under the European Union's Horizon 2020 research and innovation programme (grant agreement No 885203). Part of this paper was conceived while the second author was in residence at the Simons Laufer Mathematical Sciences Institute (formerly MSRI) in Berkeley, California, during the Spring 2024 semester.}
\address[Anya Nordskova and Michel Van den Bergh]{Vakgroep Wiskunde, Universiteit Hasselt, Universitaire Campus \\
  B-3590 Diepenbeek}
\email{anya.nordskova@uhasselt.be}
\email{michel.vandenbergh@uhasselt.be}
\address[Michel Van den Bergh]{Vakgroep Wiskunde en Data Science, Vrije Universiteit Brussel, Pleinlaan 2, 1050 Brussel} 
\email{michel.van.den.bergh@vub.be}
\begin{document}
\begin{abstract}
We define a Young subgroup of the braid group as a subgroup generated by an arbitrary subset of the Birman-Ko-Lee generators.
We give an intrinsic description of such subgroups which yields, in particular,
an easy criterion to decide membership. We also give an algorithm to write an element of a Young subgroup
as a product of the generators. Our methods are based on analyzing the Hurwitz action on tuples over
free groups via a diagrammatic approach.
\end{abstract}
\maketitle
\section{Introduction}
\subsection{Statement of results}
Let $B_n$ be the $n$-strand braid group with the usual generators
$\sigma_1,\ldots,\allowbreak \sigma_{n-1}$. It is standard that any
subgroup of $B_n$ generated by a subset of these generators is again
a product of braid groups.  
More precisely such a subgroup is of the form $B_Q:=\prod_{i} B_{Q_i}$ where $Q=(Q_i)_i$
is a partition of $\{1,\ldots,n\}$ consisting of
intervals and $B_{Q_i}$ stands for the braid group whose strands are labeled by the elements of $Q_i$.

\medskip

Under the canonical morphism $B_n\r S_n$ the generator $\sigma_i$ maps to the transposition $(i,i+1)$. In the celebrated paper \cite{BKL} the authors define certain
``dual'' generators $(a_{ij})_{i<j}$ for $B_n$ which map to the transposition $(i,j)$. Formally $a_{ij}$ 
is defined by
\[
a_{ij}=\sigma_i\cdots\sigma_{j-2}\sigma_{j-1}\sigma^{-1}_{j-2}\cdots\sigma_i^{-1}.
\]
To clarify this unenlightening definition it is best to identify $B_n$
with the mapping class group of $(\RR^2,\{p_1,\ldots,p_n\})$. If we place $p_1,\ldots,p_n$
on the horizontal axis then $\sigma_k\in B_n$ may be identified with
the half twist (see \S\ref{sec:halftwist}) associated to the interval
$\Sigma_k=[p_k,p_{k+1}]$  (we orient $\RR^2$ clockwise).  With this convention $a_{ij}\in B_n$ is
equal to the half twist associated to the lower {arc} $A_{ij}$
connecting $p_i$ to $p_j$ as in Figure \ref{fig:Aij}.
\begin{figure}[ht]
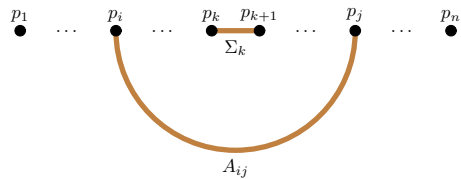

\disk
\caption{The Birman-Ko-Lee generators $a_{ij}$}
\label{fig:Aij}
\end{figure}

Since $\sigma_i=a_{i,i+1}$ it makes sense to generalize the setting considered in the first paragraph
by investigating subgroups of $B_n$ generated by a subset of the generators $(a_{ij})_{ij}$. 

From the mapping class group interpretation, or directly from the relations among the $a_{ij}$ (see e.g.\ \cite[Lemma VIII.1.3]{MR2463428}) one deduces that any
element of $\{a_{ij},a_{jk},a_{ik}\}$ can be expressed in terms of the two others.
If follows that we only need to consider sets of generators associated to a partition of $\{1,\ldots,n\}$.
So if $Q=\{Q_1,\ldots,Q_m\}$ is such a partition then we define the \emph{Young subgroup} $B_Q$ of~$B_n$ as the subgroup generated by those
$a_{ij}$ for which $i,j$ are in the same component of~$Q$. This extends our earlier use of the notation $B_Q$ when $Q$ consisted of a collection of intervals.

We say that a partition $Q$ is \emph{non-crossing} if the {arc}s in Figure \ref{fig:Aij} which connect points in the same partition do not intersect for different partitions
or, in other words, if we do not have 
$
i<j<k<l$ with $i,k\in Q_s$, $j,l\in Q_t$ with $s\neq t$.
If~$Q$ is non-crossing then   $a_{ij}a_{kl}=a_{kl}a_{ij}$ for all pairs $\{i,j\}$ and $\{k,l\}$ contained in different components of $Q$ and thus it follows that, as in the first paragraph, we have
\[
B_Q=\prod_{i=1}^m B_{Q_i}.
\]
\begin{remark}
In this case $B_Q$ is a so-called {\em standard parabolic subgroup of $B_n$} in the sense of \cite[Definition 2.2]{Godelle}.
\end{remark}
However if $Q$ is not non-crossing
then $B_Q$ is an altogether different beast as can be inferred from Lemma \ref{lem:norel} which states that if $i<j<k<l$
or equivalently if $A_{ik}$ and $A_{jl}$ intersect in Figure \ref{fig:Aij} then $a_{ik}$ and $a_{jl}$ generate a free subgroup of $B_n$.

\medskip

In this paper we study the groups $B_Q$ for arbitrary partitions $Q$. In particular we solve the following two problems:
\begin{enumerate}
\item[(P1)] We give a more intrinsic description of $B_Q$ which allows one to decide if a given element of $B_n$ is in $B_Q$.
\item[(P2)] If $b\in B_Q$ then we give an algorithm to write $b$ in terms of the generators of $B_Q$.
\end{enumerate}
We now explain this. Recall that if $G$ is a group then $B_n$ acts on $G^n$ via the ``Hurwitz action'':
\[
\sigma_i(g_1,\ldots,g_i,g_{i+1},\ldots,g_n)=(g_1,\ldots,g_{i+1}, g_{i+1}^{-1}g_i g_{i+1},\ldots,g_n)\,.
\]
One checks that if $g_i=g_j$ then $a_{ij}$ leaves $(g_i)_i$ invariant.
\medskip

For a partition $Q$ with $m$ parts put $F_m=\langle x_1,\ldots,x_m\rangle$ and define $t\in F_m^n$ in such
a way that $t_i=x_s$ iff $i\in Q_s$. We prove:
\begin{theorem}[Theorem \ref{th:mainth}\eqref{it:mainth2}] \label{eq:stab}
$B_Q$ is equal to the stabilizer of $t$ in $F_m^n$.
\end{theorem}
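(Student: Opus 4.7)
The easy inclusion $B_Q\subseteq\operatorname{Stab}_{B_n}(t)$ is a direct calculation from the Hurwitz action formula. From
$\sigma_i(g_1,\ldots,g_n)=(g_1,\ldots,g_{i+1},g_{i+1}^{-1}g_ig_{i+1},\ldots,g_n)$,
one checks by induction on $j-i$ that if $g_i=g_j$ then $a_{ij}$ fixes $(g_1,\ldots,g_n)$ (and more generally permutes the tuple in a way that respects equality of entries). Applied to $t$, each generator $a_{ij}$ of $B_Q$ (where $i,j$ lie in the same $Q_s$) satisfies $t_i=t_j=x_s$, hence fixes $t$. Thus $B_Q\subseteq\operatorname{Stab}_{B_n}(t)$.

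The substantial content lies in the converse. The plan is to exploit the diagrammatic interpretation of the Hurwitz action developed earlier in the paper. Recall that a braid $b\in B_n$ can be encoded by a collection of arcs in the punctured disk, and the action on a tuple $(g_1,\ldots,g_n)\in F_m^n$ corresponds to tracking how the loops around the punctures representing the $g_i$ are transported by~$b$. For the specific tuple $t$, the label $x_s$ attached to the $i$-th puncture records only which block $Q_s$ contains $i$, so the condition $b\cdot t=t$ means that after applying $b$, each puncture originally labeled $x_s$ is again encircled by a loop freely homotopic to the standard $x_s$-loop.

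The core of the proof will be an inductive reduction procedure. Given $b\in\operatorname{Stab}_{B_n}(t)$, represent $b$ by a diagram in reduced form (as in the earlier part of the paper, cf.\ the reduced path pictures). One shows that $b$ induces a permutation $\pi\in S_n$ which preserves the partition $Q$ (i.e.\ $\pi(Q_s)=Q_s$ for each $s$): indeed, because the letters $x_s$ of $F_m$ are free, the coordinates of $b\cdot t$ determine the block labels, so no mixing between distinct $Q_s$ is possible. Having fixed the partition data, one peels off a single generator $a_{ij}$ with $i,j\in Q_s$ to reduce a chosen complexity measure on the diagram (for instance the total intersection number of the arcs with a reference system, or the word length of the image tuple before imposing $b\cdot t=t$). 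The condition that the tuple is still stabilized after this reduction is preserved by multiplying by such $a_{ij}^{\pm1}$, since those lie in $B_Q$.

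The main obstacle is organising this reduction so that it terminates and uses only generators indexed by pairs within a single block. This is exactly where the geometric hypothesis $b\cdot t=t$ is needed: without it, the natural reduction of a braid might require a crossing $a_{ij}$ with $i,j$ in different blocks, which is not allowed. The argument must show that whenever a reduction step would produce such a forbidden generator, the stabilizer hypothesis forces cancellation (two strands from different blocks can be isotoped apart, as in the disjoint-support configurations illustrated in the figures). Once this is established, iterating the reduction writes $b$ explicitly as a product of generators of $B_Q$, simultaneously proving the inclusion $\operatorname{Stab}_{B_n}(t)\subseteq B_Q$ and solving problem~(P2).
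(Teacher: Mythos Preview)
Your outline has the right overall shape---reduce a complexity and peel off one $a_{ij}$ at a time---but it is missing the key device that makes the argument actually go through, and as written the reduction step is not justified.

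The paper does \emph{not} work directly with the tuple $t\in F_m^n$. Instead it lifts to a free group on $n$ generators: introduce independent variables $v_1,\ldots,v_n$, set $t_{\triv}=(v_1,\ldots,v_n)\in F_n^n$, and study $t:=b(t_{\triv})$ together with its arc diagram $P=b(P_{\triv})$. This lift is essential for two reasons. First, the arc-diagram machinery developed earlier (uniqueness of the diagram, compatibility with the braid action) requires $m=n$; without it you cannot invoke Proposition~\ref{prop:compat} or Corollary~\ref{cor:arc}. Second, and more importantly, the lift is what \emph{produces} the allowed generator. If $b\neq 1$ then $l(t)>n$, while the stabilizer hypothesis gives $l(b(u))=n$. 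Hence the substitution $v_i\mapsto u_i$ must cause cancellation somewhere, which means some $t_k$ contains a subword $v_i^{\eta}v_j^{-\eta}$ with $u_i=u_j$---automatically $i,j$ lie in the same block of $Q$. This subword corresponds to two adjacent lower arcs in $P$, and the half twist along the connecting interval is exactly $b\circ a_{ij}^{\eta}\circ b^{-1}$; replacing $b$ by $b\circ a_{ij}^{\eta}$ strictly decreases the geometric complexity $c(P)$.

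Your proposal attempts to run the reduction directly on $b$ (or on its action on $t\in F_m^n$) and then argue that ``the stabilizer hypothesis forces cancellation'' whenever a forbidden generator would appear. That is precisely the step that needs an argument, and the heuristic you give (``two strands from different blocks can be isotoped apart'') is not correct: crossing partitions are the whole point, and there is no a priori reason a generic reduction of $b$ avoids $a_{ij}$ with $i,j$ in different blocks. The lift to $F_n$ is what converts the stabilizer hypothesis into a concrete word-theoretic cancellation that singles out a pair $i,j$ in the \emph{same} block.
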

\begin{example} \label{ex:usual}
Assume $m=2$ and $F_2=\langle x,y\rangle$. Put $t=(x,y,x,y)$. Then the stabilizer of $t$ is generated by $a_{13} = \sigma_1\sigma_2\sigma_1^{-1}$ and $a_{24} = \sigma_2\sigma_3\sigma_2^{-1}$. These elements generate a free subgroup of $B_4$ by Lemma \ref{lem:norel}.
\end{example}
Theorem \ref{eq:stab} immediately solves (P1): to check that $b\in B_n$ is in $B_Q$  we only need to verify that $bt=t$.
For the algorithm alluded to in (P2) we refer to \S\ref{sec:algo}. 

\medskip

Besides knowing the stabilizer of $t$ it is of course also interesting to know its orbit. This orbit is described in the following theorem.
\begin{theorem}[Theorem \ref{th:mainth}\eqref{it:mainth1}] The orbit of $t = (t_1, \dots,t_n)$ is given by the set of elements in $\bigcup_{\tau \in S_n} C(t_{\tau(1)})\times\cdots\times C(t_{\tau(n)})$ whose product is $t_1\cdots t_n$
(where $C(x)$ denotes the conjugacy class of $x$).
\end{theorem}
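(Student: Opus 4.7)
The plan proceeds by verifying both inclusions separately. For the inclusion ``orbit $\subseteq \dots$'', the crux is that the Hurwitz action preserves two data: the product $g_1 g_2 \cdots g_n$ and the multiset of conjugacy classes $\{[g_1], \dots, [g_n]\}$ of the entries (ordered up to the canonical action of $B_n$ on $\{1,\ldots,n\}$ via $B_n \twoheadrightarrow S_n$). Both are checked directly on a generator $\sigma_i$: the product is preserved because $g_i g_{i+1} = g_{i+1} \cdot (g_{i+1}^{-1} g_i g_{i+1})$, while $[g_i]$ and $[g_{i+1}]$ are merely swapped. Since every entry of $t$ is one of the free generators $x_s$ and $B_n \twoheadrightarrow S_n$ is surjective, the orbit of $t$ is contained in the union $\bigcup_{\tau \in S_n} C(t_{\tau(1)}) \times \cdots \times C(t_{\tau(n)})$ intersected with the level set $\{g : g_1 \cdots g_n = t_1 \cdots t_n\}$.

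For the reverse inclusion, I would take an arbitrary tuple $g = (g_1, \dots, g_n)$ satisfying the two constraints and show that it can be brought to $t$ by a sequence of Hurwitz moves. Writing each $g_i = w_i x_{s_i} w_i^{-1}$ as a conjugate of a free generator, the tuple acquires a geometric interpretation as an ordered system of based loops in a disk with $m$ punctures (one per $x_s$), each going once around exactly one puncture. A Hurwitz move translates into an explicit operation on this loop system, and the strategy is to introduce a complexity $\Phi(g)$ -- for instance the total reduced length $\sum_i |g_i|$, refined if needed by the number of transverse intersections in a minimal planar representative -- and to show that whenever $g$ does not already coincide with $t$, some $\sigma_i^{\pm 1}$ strictly decreases $\Phi$.

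The main obstacle will be the analysis of the minima of $\Phi$: one has to prove that every $\Phi$-minimal tuple in the right-hand side is of the form $t_\tau := (t_{\tau(1)}, \ldots, t_{\tau(n)})$ for some $\tau \in S_n$, and then that any two such $t_\tau, t_{\tau'}$ lie in a single $B_n$-orbit (the latter follows from surjectivity of $B_n \twoheadrightarrow S_n$ applied to $t$). Here the hypothesis $\prod g_i = \prod t_i$ is indispensable: without it one could construct irreducible configurations with genuinely nontrivial conjugators $w_i$ that resist all reduction. The diagrammatic framework developed in the paper is precisely what makes this minimality analysis tractable, by recasting the algebraic question about the free group $F_m$ as a statement about simplification of systems of based loops in the punctured disk, where the product constraint translates into a tangible homotopical condition.
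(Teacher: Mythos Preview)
Your high-level strategy matches the paper's: both use the total reduced length $\Phi(g)=\sum_i l(g_i)$ as complexity and argue by descent. Two points are worth noting.

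First, the base case is simpler than you suggest. If $\Phi(g)=n$ then each $g_i$ has length~$1$, hence equals some positive generator $x_s$ (being conjugate to one). The product $g_1\cdots g_n$ is then a positive word in the generators, and equality with the positive word $t_1\cdots t_n$ in $F_m$ forces $g_i=t_i$ for all $i$. So there is a \emph{unique} minimal tuple, namely $t$ itself; the extra step of connecting different $t_\tau$'s is unnecessary.

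Second, and more substantively: the paper's proof of this part (\S\ref{sec:transitivity}) is purely algebraic and does not use the arc-diagram machinery at all. The key is Lemma~\ref{lem:reduction_lemma}, a direct word-combinatorial analysis of a single pair $(CuC^{-1},DvD^{-1})$ showing exactly when neither $\sigma_1$ nor $\sigma_1^{-1}$ shortens it. One then observes that if $\Phi(g)>n$, the product condition $\prod g_i = t_1\cdots t_n$ (a word of length $n$) forces enough cancellation that some adjacent pair must violate this ``locally minimal'' criterion, so some $\sigma_j^{\pm 1}$ strictly decreases $\Phi$. This is essentially Artin's 1947 argument. The diagrammatic framework is reserved for the harder stabilizer statement, Theorem~\ref{th:mainth}\eqref{it:mainth2}; invoking it here is overkill and, since your proposal does not actually carry out the reduction step, leaves the genuine content of the proof unaddressed.
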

Although it is not used in the statements of our results, our proof makes essential use of a procedure to associate certain arc diagrams to elements of 
$C(t_{\tau(1)})\times\cdots\times C(t_{\tau(n)})$. This is explained in \S\ref{sec:arcdiagramfree}.
\subsection{Motivation}
The results in this paper will be used elsewhere to describe the orbit and the stabilizer of certain exceptional collections  under the mutation action~\cite{MR992977} by the braid group.

\section{Preliminaries and statement of the main results}
\subsection{The Hurwitz action}
Below $B_n$ stands for the braid group with standard generators  $\sigma_1,\ldots,\sigma_{n-1}$
and $F_m$ stands for the free group
generated by $x_1,\ldots,x_m$. The group $B_n$ acts on $F_n$ via
\[
\sigma_i(x_j)=
\begin{cases}
x_{i}x_{i+1}x_i^{-1}&\text{if $j=i$},\\
x_{i}&\text{$j=i+1$},\\
x_j&\text{$j\not\in\{i,i+1\}$}.
\end{cases}
\]
For convenience we give the formula for $\sigma_i^{-1}$.
\[
\sigma_i^{-1}(x_j)=
\begin{cases}
x_{i+1}&\text{if $j=i$},\\
x_{i+1}^{-1}x_{i}x_{i+1}&\text{$j=i+1$},\\
x_j&\text{$j\not\in\{i,i+1\}$}.
\end{cases}
\]
If $G$ is a group then a \emph{Hurwitz system} of length $n$ is a group
homomorphism $\phi:F_n\r G$.  We identify such a Hurwitz system 
with the element
of $G^n$ given by
$(\phi(x_1),\ldots,\phi(x_n))\in G^n$.

Since $B_n$ acts on $F_n$, it also acts on $\Hom_{\operatorname{Groups}}(F_n,G)$ by the usual formula
\[
(b\cdot \phi)(w)=\phi(b^{-1}w).
\]
for $b\in B_n$, $w\in F_n$, $\phi\in \Hom_{\operatorname{Groups}}(F_n,G)$.
  The corresponding action on $G^n$ is referred to as the \emph{Hurwitz action} and is given by
\begin{align*}
\sigma_i(g_1,\ldots,g_i,g_{i+1},\ldots, g_n)&=(g_1,\ldots,g_{i+1}, g^{-1}_{i+1} g_i g_{i+1},\ldots, g_n),\\
\sigma_i^{-1}(g_1,\ldots,g_i,g_{i+1},\ldots, g_n)&=(g_1,\ldots,g_i g_{i+1}g_i^{-1},g_i,\ldots, g_n).
\end{align*}
If $b\in B_n$ then we denote the corresponding element of $S_n$ by $\bar{b}$. $S_n$ acts on $F_n$ by permutation of the
variables, i.e. $\sigma(x_i)=x_{\sigma(i)}$. The corresponding action on  $G^n\cong \Hom(F_n,G)$ is given by $\sigma\cdot (g_1,\ldots, g_n)=(g_{\sigma^{-1}(1)},\ldots, g_{\sigma^{-1}(n)})$.

The following lemma recalls some trivial properties of the Hurwitz action.
\begin{lemma}
\label{lem:trivial_hurwitz}
\begin{enumerate}
\item If $\mu:G^n\r G$ denotes the product map then for $b\in B_n$, and $g\in G^n$ we have $\mu(b g)=\mu (g)$.
\item If $b\in B_n$ then the action of $b$ on $G^n$ sends $C(g)$ to $C(\bar{b}(g))$  where $C(g)$ denotes the conjugacy class of $g$.
\end{enumerate}
\end{lemma}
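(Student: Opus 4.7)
The plan is to reduce both statements to a verification on the standard generators $\sigma_i^{\pm 1}$ of $B_n$, since both $\mu$-invariance and the conjugacy-class assertion are multiplicative in $b$.

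For part (1), I would first observe that the set $\{b\in B_n\mid \mu(bg)=\mu(g)\ \text{for all}\ g\in G^n\}$ is a subgroup of $B_n$, so it suffices to check the identity on the generators $\sigma_i$. For this I would just compute directly from the definition: writing $\sigma_i(g)=(g_1,\dots,g_{i-1},g_{i+1},g_{i+1}^{-1}g_ig_{i+1},g_{i+2},\dots,g_n)$, the product telescopes as
\[
g_1\cdots g_{i-1}\cdot g_{i+1}\cdot g_{i+1}^{-1}g_ig_{i+1}\cdot g_{i+2}\cdots g_n
 = g_1\cdots g_{i-1}\cdot g_i g_{i+1}\cdot g_{i+2}\cdots g_n = \mu(g).
\]

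For part (2), the assertion is that each component of $bg$ lies in the conjugacy class of the corresponding component of $\bar b(g)$, where $\bar b\in S_n$ acts by permutation. Again this property is closed under composition (if $b'$ moves $C(g)$ to $C(\bar{b}'(g))$ and $b''$ moves $C(\bar{b}'(g))$ to $C(\bar{b}''\bar{b}'(g))$, then $b''b'$ moves $C(g)$ to $C(\overline{b''b'}(g))$), so again a check on $\sigma_i^{\pm 1}$ suffices. But this is immediate from the defining formula: the $i$-th component of $\sigma_i(g)$ is $g_{i+1}\in C(g_{i+1})$ and the $(i+1)$-th component is $g_{i+1}^{-1}g_ig_{i+1}\in C(g_i)$, which is exactly what the transposition $\bar\sigma_i=(i,i+1)$ prescribes. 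The remaining components are unchanged.

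Neither step presents any real obstacle; the only thing to be careful about is the reduction from arbitrary $b\in B_n$ to the generators. Since the Hurwitz action is a left action and both claims are preserved under composition of braids (for (1) this is trivial; for (2) one must note that conjugacy classes are permutation-equivariant), induction on the length of a word representing $b$ in $\sigma_1^{\pm 1},\dots,\sigma_{n-1}^{\pm 1}$ completes the proof.
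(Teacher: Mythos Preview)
Your proof is correct. The paper does not give a proof of this lemma at all, introducing it as recalling ``some trivial properties of the Hurwitz action''; your reduction to the generators $\sigma_i$ followed by a direct computation is exactly the standard verification one would supply if pressed.
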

From Lemma \ref{lem:trivial_hurwitz} we obtain that for $g_1,\ldots,g_n\in G$
\[
\left(\bigcup_{\tau\in S_n} C(g_{\tau(1)})\times\cdots\times C(g_{\tau(n)})\right) \cap \mu^{-1}(g_1\ldots g_n) \subset G^n
\]
is stable under the $B_n$-action.
\subsection{The Birman-Ko-Lee generators}
If $1\le i<j\le n$ then the corresponding Birman–Ko–Lee generator \cite{BKL} of $B_n$ is defined as  
\begin{equation}
\label{eq:aij}
a_{ij}=\sigma_i\cdots\sigma_{j-2}\sigma_{j-1}\sigma^{-1}_{j-2}\cdots\sigma_i^{-1}.
\end{equation}
By convention we also put $a_{ji}=a_{ij}$.
Using the braid relations we may obtain the following alternative expression for $a_{ij}$:
\begin{equation}
\label{eq:aijalt}
a_{ij}=\sigma_{j-1}^{-1}\cdots \sigma_{i+1}^{-1} \sigma_i \sigma_{i+1}\cdots \sigma_{j-2}\sigma_{j-1}.
\end{equation}
The $a_{ij}$ satisfy well-known quadratic relations  (see e.g.\ \cite[Lemma VIII.1.3]{MR2463428}). In particular if $[i,k]$, $[j,l]$ share no end points and they are either disjoint or nested then $a_{ik}$ and $a_{jl}$ commute.  In the remaining cases rather the opposite is true!
\begin{lemma}
\label{lem:norel} If $1\le i<j<k<l\le n$ then $a_{ik}$ and $a_{jl}$ generate a free subgroup of~$B_n$.
\end{lemma}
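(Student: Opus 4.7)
The strategy is to reduce the statement to the case $n=4$ via a subsurface inclusion argument, and then to prove the freeness of $\langle a_{13},a_{24}\rangle\subset B_4$ by a direct ping-pong argument on the Hurwitz action.

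For the reduction, inside the $n$-punctured disk $D_n=(\mathbb{R}^2,\{p_1,\ldots,p_n\})$ I would choose a closed tubular neighborhood $S$ of $A_{ik}\cup A_{jl}$. Because $A_{ik}$ and $A_{jl}$ are simple arcs meeting transversely in exactly one interior point and having their four endpoints at the distinct punctures $p_i,p_j,p_k,p_l$, the surface $S$ is homeomorphic to a disk with these four punctures, in cyclic order $p_i,p_j,p_k,p_l$. The half twists $a_{ik}$ and $a_{jl}$ are supported inside $S$, so they lie in the image of the canonical ``extension by identity'' homomorphism $\iota:\mathrm{MCG}(S,\partial S)\to \mathrm{MCG}(D_n,\partial D_n)=B_n$, and correspond under the identification $\mathrm{MCG}(S,\partial S)\cong B_4$ to $a_{13},a_{24}$. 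Since $\iota$ is injective (a standard subsurface-injectivity result for essential subsurfaces of disks), it suffices to prove that $\langle a_{13},a_{24}\rangle$ is free of rank two inside $B_4$.

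For the $B_4$ case, I would work with the Hurwitz action of $B_4$ on $F_2^4$, where $F_2=\langle x,y\rangle$. Direct computation shows that both $a_{13}$ and $a_{24}$ fix $t_0=(x,y,x,y)$ (see Example~\ref{ex:usual}). Starting from an auxiliary tuple such as $s=(x,x,y,y)$, which is \emph{not} fixed by either generator, I would write down the formulas for $a_{13}^{\pm 1}$ and $a_{24}^{\pm 1}$ on $F_2^4$ explicitly and identify two disjoint subsets $U,V\subset F_2^4$, defined by combinatorial conditions on the first/last syllables and word-length pattern of the four coordinates, satisfying $a_{13}^{\,k}(V)\subset U$ and $a_{24}^{\,k}(U)\subset V$ for all nonzero $k\in\mathbb{Z}$. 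The classical Ping-pong Lemma will then yield $\langle a_{13},a_{24}\rangle\cong F_2$.

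The main obstacle is the ping-pong bookkeeping: unlike in the textbook case of hyperbolic isometries, both $a_{13}$ and $a_{24}$ have large fixed sets in $F_2^4$, so the argument depends sensitively on the choice of the auxiliary seed $s$ and on a careful tracking of how syllable lengths grow under iteration. If the direct ping-pong approach turns out to be too tedious, a viable alternative is to reduce to the Dehn twist setting: $a_{13}^2$ and $a_{24}^2$ are Dehn twists along the boundaries of tubular neighborhoods of $A_{13}$ and $A_{24}$, which meet in exactly two points, so the classical Thurston--Ishida theorem gives $\langle a_{13}^2,a_{24}^2\rangle\cong F_2$; one must then carry out a further combinatorial group theory argument inside $B_4$ (using torsion-freeness of $B_4$ and the canonical surjection $\langle a_{13},a_{24}\rangle\twoheadrightarrow\langle (13),(24)\rangle\cong(\mathbb{Z}/2)^2\subset S_4$) to upgrade from the squares to the half twists themselves.
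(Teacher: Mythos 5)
Your reduction to $n=4$ (take a regular neighborhood of $A_{ik}\cup A_{jl}$, identify it with a $4$-punctured disk, and use injectivity of the induced map on mapping class groups) is sound and is essentially the paper's first step, which it phrases more tersely as "using the mapping class group interpretation we may assume $n=4$". The problem is that everything after that is a promise rather than a proof. On your main route you say you will "identify two disjoint subsets $U,V\subset F_2^4$" making ping-pong work for the Hurwitz action, but you never exhibit them, and you yourself flag the bookkeeping as the main obstacle. Since the entire content of the lemma is precisely the certification that no nontrivial word in $a_{13},a_{24}$ acts trivially, leaving the ping-pong sets unspecified leaves the argument with no content; note also that both generators fix $(x,y,x,y)$ and have large fixed sets, so the choice of seed and of the sets $U,V$ is exactly where the difficulty lives, not a routine afterthought.

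Your fallback route has a genuine logical gap in the "upgrade" step. Granting that $a_{13}^2$ and $a_{24}^2$ are Dehn twists about the boundaries of regular neighborhoods of the two arcs and that these curves have geometric intersection number at least $2$, Ishida's theorem does give $\langle a_{13}^2,a_{24}^2\rangle\cong F_2$. But this does not imply $\langle a_{13},a_{24}\rangle\cong F_2$ via "torsion-freeness plus the surjection onto $(\mathbb{Z}/2)^2$": even inside a genuine free group $\langle a,b\rangle$ of rank $2$, the subgroup $\langle a^2,b^2\rangle$ has \emph{infinite} index (it is a proper, infinite-index subgroup of the rank-$5$ kernel of $F_2\to(\mathbb{Z}/2)^2$), so no Stallings--Swan "torsion-free and virtually free implies free" argument is available, and there is no general principle promoting freeness of $\langle a^2,b^2\rangle$ to freeness of $\langle a,b\rangle$. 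If you want to argue through Dehn twists, the correct device (which the paper records in a remark) is to pass to the double cover branched over the punctures, where the half twists \emph{themselves} -- not their squares -- lift to Dehn twists along curves intersecting twice, so that Ishida/Thurston applies directly. For comparison, the paper's actual proof of the $n=4$ case is purely algebraic and short: compose the "exotic" homomorphism $B_4\to B_3$ sending $\sigma_1,\sigma_3\mapsto\sigma_1$ and $\sigma_2\mapsto\sigma_2$ (which sends the pair to $\sigma_1\sigma_2\sigma_1^{-1}$, $\sigma_2\sigma_1\sigma_2^{-1}$) with the homomorphism $B_3\to\langle u,v\mid u^3=v^2=1\rangle\cong\ZZ/3\ZZ\ast\ZZ/2\ZZ$, $\sigma_1\mapsto uv$, $\sigma_2\mapsto vu$, and observe that the images $u^{-1}vu^{-1}$ and $vu^{-1}vu^{-1}v$ satisfy no relation by the normal form in a free product. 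Some such concrete certificate of freeness is what your write-up is missing.
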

\begin{proof}
Using the mapping class group interpretation of $a_{ij}$ (see the introduction) we see that we may assume $n=4$ and $a_{ik}=a_{13}=\sigma_1\sigma_2\sigma_1^{-1}$, $a_{jl}=a_{24}=\sigma_2\sigma_3\sigma_2^{-1}$. By considering the ``exotic'' homomorphism $B_4\r B_3$ which maps $\sigma_1,\sigma_3$ to $\sigma_1$ and $\sigma_2$ to $\sigma_2$ we see
that it is sufficient to prove that $\sigma_1\sigma_2\sigma_1^{-1}$ and $\sigma_2\sigma_1\sigma_2^{-1}$ generate a free subgroup of $B_3$.
We now consider the homomorphism $\theta:B_3\r \langle u,v\mid u^3=1=v^2\rangle\cong \ZZ/3\ZZ\ast \ZZ/2\ZZ$ which sends $\sigma_1$ to $uv$ and $\sigma_2$ to $vu$.
We have $\theta(\sigma_1\sigma_2\sigma_1^{-1})=u^{-1}vu^{-1}$ and $\theta(\sigma_2\sigma_1\sigma_2^{-1})=vu^{-1}vu^{-1}v$ and it is clear that 
$u^{-1}vu^{-1}$ and $vu^{-1}vu^{-1}v$ don't satisfy any relations in the free product $\ZZ/3\ZZ\ast \ZZ/2\ZZ$.
\end{proof}
\begin{remark} Another way to prove Lemma \ref{lem:norel} is to observe that (1) $A_{ik}$ and $A_{jl}$ intersect in Figure \ref{fig:Aij}, (2) there a  well-known
relationship between half twists and Dehn twists in double covers (see \cite[\S9.4]{MR2850125}) and (3) Dehn twists corresponding to twice intersecting curves
generate a free group (see \cite[\S3.5.2]{MR2850125}).
\end{remark}
\subsection{Young subgroups of the braid group}
One has $\bar{a}_{ij}=(ij)$ and the following is an easy verification
\begin{lemma} \label{lem:birman}
Let $g\in G^n$ be such that $g_i=g_j$. Then $a_{ij}(g)=g$.
\end{lemma}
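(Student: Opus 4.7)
The plan is to reduce the lemma to the trivial base case $j=i+1$ (where $a_{i,i+1}=\sigma_i$ and the claim is immediate from the formula for $\sigma_i$ on the Hurwitz action) by exploiting the factorization
\[
a_{ij}=c^{-1}\sigma_{j-1}c,\qquad c:=\sigma_{j-2}^{-1}\cdots\sigma_{i+1}^{-1}\sigma_i^{-1},
\]
which comes directly from the defining equation \eqref{eq:aij}. If we can show that the tuple $c(g)$ has equal entries in positions $j-1$ and $j$, then by the base case $\sigma_{j-1}(c(g))=c(g)$, and hence $a_{ij}(g)=c^{-1}\sigma_{j-1}c(g)=c^{-1}c(g)=g$.

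Checking the equality $(c(g))_{j-1}=(c(g))_j$ is the main bookkeeping step, but it is straightforward. Since $c$ only involves $\sigma_k^{-1}$ for $k\le j-2$, the $j$-th coordinate is untouched, so $(c(g))_j=g_j=g_i$. For the $(j-1)$-st coordinate I would prove by induction on $k\ge i$ that, after applying $\sigma_k^{-1}\cdots\sigma_i^{-1}$, the entry at position $k+1$ equals $g_i$. The base case $k=i$ is just the formula $\sigma_i^{-1}(g_1,\ldots,g_n)=(\ldots,g_ig_{i+1}g_i^{-1},g_i,g_{i+2},\ldots)$. For the inductive step, if the entry at position $k$ is $g_i$ after applying $\sigma_{k-1}^{-1}\cdots\sigma_i^{-1}$, then applying $\sigma_k^{-1}$ moves this $g_i$ to position $k+1$ (the formula for $\sigma_k^{-1}$ sends $(h_k,h_{k+1})$ to $(h_kh_{k+1}h_k^{-1},h_k)$, so the new entry at $k+1$ is the old entry at $k$). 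Taking $k=j-2$ gives $(c(g))_{j-1}=g_i$, as required.

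No step here is hard; the only point that could look mysterious at first is the factorization $a_{ij}=c^{-1}\sigma_{j-1}c$, but it is a tautology from \eqref{eq:aij}. Conceptually, the argument is simply that $c$ transports the pair of equal entries $(g_i,g_j)=(g_i,g_i)$ to adjacent positions $(j-1,j)$, where the trivial case applies, and then $c^{-1}$ undoes the transport. Alternatively, one could perform the computation by tracking the entire tuple through each of the $2(j-i)-1$ generators in the product \eqref{eq:aij}, but the conjugation approach above reduces the whole verification to a single one-variable induction.
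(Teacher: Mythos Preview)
Your proof is correct. The paper itself gives no argument at all, stating only that the lemma ``is an easy verification'', so your conjugation approach via $a_{ij}=c^{-1}\sigma_{j-1}c$ is simply one clean way to carry out that verification; there is nothing to compare.
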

If $Q=\{Q_1,\ldots,Q_m\}$ is a partition of $\{1,\ldots,n\}$ then we define the corresponding \emph{Young subgroup} $B_Q$ of $B_n$ as the subgroup generated by those
$a_{ij}$ for which there exists an $s$ such that $i,j\in Q_s$.

If $g\in G^n$ then we let $Q^g$ be the partition of $\{1,\ldots,n\}$ such that $g_i=g_j$ if and only if $i,j\in Q^g_s$ for some $s$.
From Lemma \ref{lem:birman} we obtain that $B_{Q^g}\subset \operatorname{Stab}_{B_n}(g)$.
\begin{remark} Note that $B_Q$ is not the same as the ``mixed braid group'' \cite{MR2103472} associated to $Q$. The latter consist of those braids whose associated permutation preserves~$Q$. For example if $Q$ consists entirely of singletons then $B_Q$ is trivial whereas the mixed braid group is  the pure braid group.
\end{remark}
\subsection{Main result}
\begin{theorem} \label{th:mainth}
Let $u_1,\ldots, u_n\in \Sscr := \{x_1,  \dots, x_m\} \subset F_m$. 
\begin{enumerate}
\item \label{it:mainth1} The $B_n$-orbit of ${u=(u_1,\ldots,u_n)}$ is equal to
\begin{equation}
\label{eq:tuples}
\left(\bigcup_{\tau\in S_n} C(u_{\tau(1)})\times\cdots\times C(u_{\tau(n)})\right) \cap \mu^{-1}(u_1\ldots u_n).
\end{equation}
\item \label{it:mainth2}
The stabilizer of $u$ is equal to $B_{Q^u}$.
\end{enumerate}
\end{theorem}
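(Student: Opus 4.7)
The proof naturally splits according to the two parts of the theorem. For part \eqref{it:mainth1}, the inclusion $B_n\cdot u \subset \eqref{eq:tuples}$ is immediate from Lemma \ref{lem:trivial_hurwitz}: the Hurwitz action preserves the product $\mu$, and the action of $b\in B_n$ sends each coordinate's conjugacy class via the induced permutation $\bar b$. The real content is the reverse inclusion, and for this I would invoke the arc diagram machinery announced in \S\ref{sec:arcdiagramfree}. The idea is to encode every $v = (v_1,\ldots,v_n)$ in \eqref{eq:tuples} by a system of $n$ colored arcs in the punctured disk, with the $i$-th arc running from a base point to $p_i$, colored by some element of $\Sscr = \{x_1,\ldots,x_m\}$ (determined by the conjugacy class of $v_i$), and with $v_i = w_i u_{\tau(i)} w_i^{-1}$ where $w_i$ is the word read along the arc. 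The product constraint $\mu(v)=u_1\cdots u_n$ is precisely what makes these diagrams well defined up to the natural equivalence, and the Hurwitz action translates into local arc-swap moves.

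Once this dictionary is established, the plan is to introduce a suitable complexity measure on diagrams (for example total word length along the arcs, or total intersection number with a fixed reference system of straight arcs) and to prove a \textbf{reduction lemma}: any non-standard diagram admits a braid generator $\sigma_i^{\pm 1}$ whose action strictly decreases the complexity. Iterating, any $v \in \eqref{eq:tuples}$ is reduced to the standard diagram corresponding to $u$, proving $v \in B_n\cdot u$.

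For part \eqref{it:mainth2}, the inclusion $B_{Q^u}\subset \operatorname{Stab}_{B_n}(u)$ is Lemma \ref{lem:birman}. For the reverse inclusion I would reuse the diagrammatic setup: a braid $b$ stabilizing $u$ must carry the standard diagram for $u$ back to itself up to equivalence, so $b$ lies in the subgroup generated by the color-preserving arc swaps. The color-preserving swap of the arcs ending at $p_i$ and $p_j$ (with $u_i=u_j$) is precisely realized by the Birman-Ko-Lee generator $a_{ij}$ with $\{i,j\}\subset Q^u_s$ for some $s$; the proof that such swaps suffice mimics the reduction of part \eqref{it:mainth1}, but now starting from and returning to the standard diagram.

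The main obstacle will be the reduction lemma for part \eqref{it:mainth1}: one must pick a complexity measure that decreases monotonically even when the local effect of a braid move is to lengthen a single word (cancellation in the free group must be exploited carefully), and one must argue that whenever the diagram is not already in standard form, \emph{some} local configuration admits a strictly reducing move. The free-group setting, rather than an arbitrary~$G$, is essential here because it lets us detect non-triviality of an arc label faithfully from the algebra; this is the reason the theorem is stated for $F_m$ and transferred to~$B_n$ via the homomorphism $F_n\to F_m$ sending $x_i\mapsto u_i$.
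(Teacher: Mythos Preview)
Your overall strategy is sound, but the paper's division of labour is different from what you propose, and this matters.

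For part~\eqref{it:mainth1} the paper does \emph{not} use arc diagrams at all. The reduction lemma you anticipate is proved purely algebraically (Lemma~\ref{lem:reduction_lemma}, essentially from Artin~\cite{Artin}): the complexity measure is simply total reduced word length $l(t)=\sum_i l(t_i)$, and the lemma characterises exactly when neither $\sigma_i$ nor $\sigma_i^{-1}$ shortens a given adjacent pair. If $l(t)>n$, the product constraint $\mu(t)=u_1\cdots u_n$ forces this characterisation to fail for some pair, so some $\sigma_j^{\pm1}$ strictly reduces length. Your diagrammatic route would presumably work, but it is heavier machinery than needed here.

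For part~\eqref{it:mainth2} your sketch is in the right spirit, and you correctly flag the homomorphism $F_n\to F_m$, but the paper's execution has a twist you do not quite articulate. One lifts to the universal tuple $t_{\triv}=(v_1,\ldots,v_n)\in F_n^n$ and sets $t=b\cdot t_{\triv}$, $P=b\cdot P_{\triv}$; the point of working with $m=n$ is that here the arc diagram of $t$ is \emph{unique} (Lemma~\ref{lem:uniqueness}) and the stabiliser of $t_{\triv}$ is trivial. The complexity is not word length but a geometric count $c(P)$ of how much the lower arcs in the minimal diagram overlap. The key observation is then: since $b\cdot u=u$ but $l(t)>n$, some $t_k$ must contain a subword $v_i^{\eta}v_j^{-\eta}$ with $u_i=u_j$; this subword gives two adjacent lower arcs in $P$, and the half twist along the connecting interval---which equals $b\,a_{ij}^{\eta}\,b^{-1}$---reconnects them so as to strictly lower $c(P)$. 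Hence $b'=b\,a_{ij}^{\eta}$ has smaller complexity, and induction finishes. Your phrase ``color-preserving arc swaps'' captures the conclusion but not the mechanism that locates which $a_{ij}$ to apply; that mechanism is the substitution-and-cancellation step, and it is where the hypothesis $b\in\operatorname{Stab}(u)$ actually enters.
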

Theorem \ref{th:mainth}\eqref{it:mainth1} will be proved in \S\ref{sec:transitivity} and \ref{th:mainth}\eqref{it:mainth2} will be proved in \S\ref{sec:theproof}. 
\section{Transitivity}
\label{sec:transitivity}
In this section we prove Theorem \ref{th:mainth}\eqref{it:mainth1}. The proof is basically the same as the proof of \cite[Theorem 16]{Artin}. However
Lemma \ref{lem:reduction_lemma}, which is more precise than what we need here, will be used afterwards.

\medskip

Let $\Sscr=\{x_i \mid i=1,\ldots,m\}$, $\Sscr^{\pm}=\{x_i^{\pm1}\mid 1,\ldots,m\}$ and let $\Sscr^{\pm\ast}$ be the set of words over $\Sscr^{\pm}$, including the empty word which is denoted by $1$.
If $w\in S^{\pm \ast}$ then $l(w)$ denotes the length of~$w$. By convention $l(1)=0$.

We say that a word in $\Sscr^{\pm\ast}$ is \emph{reduced}
if it does not contain any subwords of the form $x_i^{\pm 1} x_i^{\mp 1}$. We denote the set of reduced words by $\Rscr$.
Every element $w$ of $F_m$ is uniquely represented by an element $\tilde{w}$ of $\Rscr$ under the product map $\mu:\Sscr^{\pm\ast} \r F_m$. We put $l(w)=l(\tilde{w})$.

It will be convenient to write
$w=w_1{\ast}\cdots{\ast} w_n$ if $w=w_1\ldots w_n$ and $l(w)=\sum_{i=1}^n l(w_i)$.  
One observes that all parenthesized versions of the right hand side of $w=w_1{\ast}\cdots{\ast} w_n$ yield equivalent statements.\footnote{For example if we have
$w=w_1{\ast} (w_2{\ast} w_3)$ then by definition $l(w)=l(w_1)+l(w_2{\ast} w_3)=l(w_1)+l(w_2)+l(w_3)$ and so $w=w_1{\ast} w_2{\ast} w_3$.} If $f=(f_1,\ldots,f_n)\in F_m^n$ then we write $l(f):=\sum_i l(f_i)$.
\begin{lemma}
\label{lem:reduction_lemma}
Assume $u,v\in \Sscr$
and consider a pair of the form
\[
f=(C{\ast}u{\ast}C^{-1}, D{\ast}v{\ast}D^{-1})
\]
with $C, D\in F_m$.
Then
\begin{equation}
\label{eq:pair1}
l(\sigma^{-1}_1(f))\ge l(f)\text{ and } l(\sigma_1(f))\ge l(f)
\end{equation}
if and only if 
\begin{equation}
\label{eq:pair2}
\begin{aligned}
C&=Z{\ast} A,\\
D&=Z{\ast} B
\end{aligned}
\end{equation}
 for suitable $Z\in F_m$ such that
\begin{equation}
\label{eq:pair3}
CuC^{-1}DvD^{-1}=Z{\ast} A {\ast}u{\ast} A^{-1} {\ast} B{\ast} v{\ast} B^{-1}
{\ast} Z^{-1}.
\end{equation}
\end{lemma}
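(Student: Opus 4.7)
I would prove this lemma by direct computation combined with a careful analysis of cancellations in $F_m$.

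\textbf{Setup.} First expand the Hurwitz images:
\begin{align*}
\sigma_1(f) &= (DvD^{-1},\ Dv^{-1}D^{-1} CuC^{-1} DvD^{-1}),\\
\sigma_1^{-1}(f) &= (CuC^{-1} DvD^{-1} Cu^{-1}C^{-1},\ CuC^{-1}).
\end{align*}
Since $u,v\in\Sscr$, the words $CuC^{-1}$ and $DvD^{-1}$ are already reduced, of lengths $2|C|+1$ and $2|D|+1$. Hence \eqref{eq:pair1} reduces to lower bounds on the remaining ``long'' coordinate in each case, namely
\[
l(Dv^{-1}D^{-1}CuC^{-1}DvD^{-1})\ge 2|C|+1,\qquad l(CuC^{-1}DvD^{-1}Cu^{-1}C^{-1})\ge 2|D|+1.
\]

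\textbf{``If'' direction.} Assume \eqref{eq:pair2} and \eqref{eq:pair3}. Substituting $C=ZA$, $D=ZB$ into the long coordinates and cancelling adjacent $Z^{-1}\cdot Z$ pairs, each rewrites as a product built from the ``letters'' $Z, A, u, A^{-1}, B, v, B^{-1}, Z^{-1}$ whose every internal junction is of a type already certified cancellation-free by \eqref{eq:pair3}. Counting letters then confirms both inequalities.

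\textbf{``Only if'' direction (contrapositive).} Let $Z$ be the longest common prefix of $C$ and $D$, so $C=Z\ast A$, $D=Z\ast B$ with $A,B$ sharing no first letter (or at least one being trivial). A preliminary remark: any $Z'$ realizing \eqref{eq:pair2}--\eqref{eq:pair3} must coincide with this maximal $Z$, because if $Z=Z'\ast W$ with $W\ne 1$ then writing $A'=WA$, $B'=WB$ the junction $A'^{-1}\ast B'=A^{-1}W^{-1}\ast WB$ would cancel, contradicting \eqref{eq:pair3}. Thus it suffices to show that if \eqref{eq:pair3} fails at the maximal $Z$, then one of $\sigma_1^{\pm 1}$ strictly shortens $f$. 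Checking each junction in turn, the only failure modes (after using that $C\ast u\ast C^{-1}$ and $D\ast v\ast D^{-1}$ are reduced and $A,B$ share no first letter) are the boundary cases
\begin{itemize}
\item[(i)] $A=1$ and $B=u^{-1}\ast B''$, breaking $u\ast B$;
\item[(ii)] $B=1$ and $A=v\ast A''$, breaking $A^{-1}\ast v$.
\end{itemize}
In case (i), direct simplification (cancelling $u\cdot u^{-1}$ twice) yields
\[
CuC^{-1}DvD^{-1}Cu^{-1}C^{-1}=ZB''vB''^{-1}Z^{-1},
\]
a word of length at most $2|Z|+2|B''|+1$, so $l(\sigma_1^{-1}(f))\le 4|Z|+2|B''|+2=l(f)-2$. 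Case (ii) is symmetric: one computes $Dv^{-1}D^{-1}CuC^{-1}DvD^{-1}=ZA''uA''^{-1}Z^{-1}$ and concludes $l(\sigma_1(f))=l(f)-2$.

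\textbf{Main obstacle.} The principal technical difficulty lies in the ``only if'' direction: verifying that the only possible failure modes of \eqref{eq:pair3} at the maximal common prefix are the boundary cases (i) and (ii), and confirming that in those cases the extra cancellations genuinely propagate through the Hurwitz image to shorten the pair (rather than being compensated somewhere else). This requires separate bookkeeping of each of the junctions $Z\ast A$, $A\ast u$, $u\ast A^{-1}$, $A^{-1}\ast B$, $B\ast v$, $v\ast B^{-1}$, $B^{-1}\ast Z^{-1}$ and of the special behavior when $A=1$ or $B=1$ collapses two adjacent junctions into one.
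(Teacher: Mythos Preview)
Your proof is correct and the ``only if'' direction matches the paper's almost exactly: the paper phrases your cases (i) and (ii) as $D=C\ast u^{-1}\ast Y$ and $C=D\ast v\ast X$ respectively, and then asserts (leaving the computation to the reader) the same length drop you compute explicitly. Your preliminary remark that the $Z$ in \eqref{eq:pair2}--\eqref{eq:pair3} must be the maximal common prefix is a useful clarification the paper leaves implicit.

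The ``if'' direction differs. The paper argues by contradiction: assuming, say, $l(\sigma_1^{-1}(f))<l(f)$, it redetermines $Z,A,B$ from the cancellation in $C^{-1}D$, computes the length of the conjugate assuming no further cancellation, obtains an absurdity, and is therefore forced into the boundary cases $A=1$ or $B=1$, each of which is then shown to contradict \eqref{eq:pair3}. Your direct approach---simplify the long coordinate to $Z A u A^{-1} B v B^{-1} A u^{-1} A^{-1} Z^{-1}$ (resp.\ the symmetric expression) and observe every junction is one appearing in \eqref{eq:pair3} or its formal inverse---is shorter and more transparent; in fact it yields the strict inequality $l(\sigma_1^{\pm 1}(f))=l(f)+4|A|+2$ (resp.\ $+4|B|+2$). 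The one thing worth spelling out in your write-up is that the ``new'' junctions $B^{-1}\ast A$, $A\ast u^{-1}$, $u^{-1}\ast A^{-1}$, $A^{-1}\ast Z^{-1}$ really are the formal inverses of junctions already present in \eqref{eq:pair3}, and that the degenerate cases $A=1$ or $B=1$ (where two junctions collapse into one) still go through.
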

\begin{proof}
We claim that \eqref{eq:pair2}\eqref{eq:pair3} imply \eqref{eq:pair1}. Assume that \eqref{eq:pair1} is false. Then either $l(\sigma_1^{-1}(f))<l(f)$ or $l(\sigma_1(f))<l(f)$.
The two cases are similar, so assume that the first case holds. Hence
\begin{equation}
\label{eq:star0}
l(Cu C^{-1}  D vD^{-1} C u^{-1} C^{-1})<
l(Dv D^{-1}).
\end{equation}
Considering the cancellations in $C^{-1}D$ we find that we may write $C=Z{\ast} A$, $D=Z\ast B$ such that $C^{-1}D=A^{-1}{\ast} B$. Note that $l(A^{-1}{\ast}B)=l(C)+l(D)-2l(Z)$.
We have
\begin{equation}
\label{eq:star}
 Cu C^{-1} D v D^{-1}C u^{-1} C^{-1}=
(C{\ast} u)(A^{-1}{\ast} B) v (B^{-1}{\ast} A) (u^{-1}{\ast} C^{-1}).
\end{equation}
If there are no further cancellations then we obtain from \eqref{eq:star0}\eqref{eq:star}:
\begin{align*}
l( Cu C^{-1} D v D^{-1}C u^{-1} C^{-1})
&=l(C)+1+l(C)+l(D)-2l(Z)+1+l(C)\\ &\qquad+l(D)-2l(Z)+1+l(C)\\
&=4l(A)+2l(D)+3\\
&<l(DvD^{-1})= l(D)+1+l(D)
\end{align*}
which may be rewritten as
$
l(A)< -2
$
which is absurd.
It follows that there must be further cancellations in the right hand side of \eqref{eq:star}. Since $CuC^{-1}=C{\ast} u{\ast} C^{-1}=Z{\ast} A{\ast} u{\ast} A^{-1}{\ast} Z^{-1}$
and $DvD^{-1}=D{\ast} v{\ast} D^{-1}=Z{\ast} B{\ast} v{\ast} B^{-1}{\ast} Z^{-1}$ we see that this is only possible if $A=1$ or $B=1$. We consider two separate cases.
\begin{enumerate}
\item $B=1$. So $C=Z{{\ast}} A$ and $D=Z$. Then \eqref{eq:star0} becomes
\begin{equation}
\label{eq:strict}
(Z{\ast} A{\ast} u{\ast} A^{-1})v(A{\ast} u^{-1} {\ast} A^{-1}{\ast} Z^{-1})< l(Z{\ast} v{\ast} Z^{-1}).
\end{equation}
For the middle part to cancel we must have that $A$ is a power of $v$ (possibly $A=v^0=1$) and $v=u$.
But then we get an equality in \eqref{eq:strict} rather than a strict inequality. Hence a contradiction.
\item $A=1$, $B\neq 1$.
The right hand side of \eqref{eq:star} now becomes
\[
(C{\ast} u)(B{\ast} v {\ast} B^{-1}) (u^{-1}{\ast} C^{-1}).
\]
Further cancellation (which must occur as shown above) will happen if and only if $B=u^{-1}{\ast} B'$. But this contradicts \eqref{eq:pair3}.
\end{enumerate}

If  \eqref{eq:pair2}\eqref{eq:pair3} does not hold then by considering
the cancellation at $C^{-1}D$ in the left hand side of \eqref{eq:pair3} one sees that one of the following must be true:
\begin{equation}
\label{eq:left_eating}
D=C{\ast}u^{-1}{\ast} Y
\end{equation}
for suitable $Y$ in $F_m$, or
\begin{equation}
\label{eq:right_eating}
C=D{\ast} v{\ast} X
\end{equation}
for suitable $X$ in $F_m$. One now checks that \eqref{eq:left_eating} implies $l(\sigma^{-1}_1(f))<l(f)$ and \eqref{eq:right_eating} implies $l(\sigma_1(f))<l(f)$, finishing 
the proof.
\end{proof}
\begin{proof}[Proof of Theorem \ref{th:mainth}\eqref{it:mainth1}]
We consider the non-obvious inclusion. Assume that $t=(t_1,\ldots,t_n)=(C_1{\ast} x_{i_1}{\ast}C_1^{-1},\ldots, C_n{\ast} x_{i_n}{\ast} C_n^{-1})$ is in \eqref{eq:tuples}. We use
induction on $l(t)$ to prove that~$t$ is in the $B_n$-orbit of $(u_1,\ldots,u_n)$. If $l(t)=n$ then $t=(u_1,\ldots,u_n)$ and there is nothing to prove. So assume $l(t)>n$.
Since
\[
(C_1{\ast} x_{i_1}
{\ast}C_1^{-1})(C_2{\ast} x_{i_2}{\ast}C_2^{-1})\cdots(C_n{\ast} x_{i_n}{\ast} C_n^{-1})=u_1\cdots u_n
\]
it follows that \eqref{eq:pair2}\eqref{eq:pair3} must fail for some pair $(C_j x_{i_j} C_j^{-1},C_{j+1} x_{i_{j+1}} C_{j+1}^{-1})$. This implies that $l(\sigma_j^\epsilon(t))<l(t)$
for $\epsilon\in \{\pm 1\}$. By induction $\sigma_j^\epsilon(t)$ is in the $B_n$-orbit of $(u_1,\ldots,u_n)$. But then this is the case for $t$ as well.
\end{proof}

\section{Arc diagrams}
\subsection{Arc diagrams}
\label{sec:arcdiagram}
Below we consider the surface $\RR^2$ with a finite set of marked points~$S$ on the horizontal axis. We will also consider the point ``at infinity'' $\infty:=(0,+\infty)$.  We equip $\RR^2$ with the clockwise orientation.
\begin{definition} Let $\Sscr=\{x_1,\ldots,x_m\}$ be a set of symbols (or ``colors'').
An \emph{arc diagram $P$  labeled by $\Sscr$}
is an oriented one-dimensional submanifold with boundary of $\RR^2$ such that $\partial P\subset S$, $P\cap S\subset \partial P$
which, outside a compact subset, is of the form $\bigcup_{i=1}^t \{a_i\}\times [b_i,+\infty[$ for  $a_i,b_i\in \RR$ and such that every connected component of $P$ is assigned
an element of $\Sscr$.
\end{definition}
\subsection{Arc diagrams for words in free groups}
\label{sec:arcdiagramfree}
If $w\in \Sscr^{\pm\ast}$ then a \emph{reduction sequence} for $w$ is a sequence of words
\[
w=w_0\r w_1\r \cdots\r w_t
\]
where $w_{k+1}$ is obtained from $w_k$ by replacing a subword $x_i^{\pm 1} x_i^{\mp 1}$ by $1$ and where $w_t$~is reduced (see \S\ref{sec:transitivity}). Any symbol $x_i^{\pm 1}$ in $w$ either survives in $w_t$, in which case we say it \emph{remains single}, or else it is annihilated by some other symbol $x_i^{\mp 1}$ in $w$ in some $w_k$ which we call the \emph{partner} of $x_i^{\pm 1}$.
The following lemma gives an easy method to recognize a potential partner.
\begin{lemma}
\label{lem:partner}
Assume $w=v_1 x_i^{\pm 1} v_2 x_i^{\mp 1} v_3$ for $v_i\in \Sscr^{\pm \ast}$. Then $x_i^{\pm 1}$ is a partner for $x_i^{\mp 1}$ in some reduction sequence for $w$ if and only if $\mu(v_2)=1$.
\end{lemma}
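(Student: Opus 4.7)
The plan is to prove both directions by tracking how symbols to the left and right of a fixed letter behave under reduction.

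For the ``only if'' direction, I would argue by a pairing/invariance argument. Fix the distinguished occurrences $x_i^{\pm 1}$ and $x_i^{\mp 1}$ in $w=v_1 x_i^{\pm 1} v_2 x_i^{\mp 1} v_3$. In any reduction sequence, every letter of $w$ either remains single at the end or has a unique partner. The key observation is that two letters can only become adjacent (and thus possibly be partnered) if every letter originally between them has already been cancelled with something. In particular, if a letter of $v_2$ were to partner with a letter of $v_1$ or $v_3$, its partner would have to cross over the distinguished $x_i^{\pm 1}$ or $x_i^{\mp 1}$, which is impossible while those distinguished letters are still present. Hence, in any reduction sequence where the two distinguished letters are partners, all letters of $v_2$ must pair up among themselves before the distinguished pair cancels. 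This means $v_2$ itself admits a complete reduction to the empty word, so $\mu(v_2)=1$.

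For the ``if'' direction, assume $\mu(v_2)=1$. Then $v_2$ can be reduced to the empty word by a sequence of elementary reductions performed entirely inside $v_2$ (this is immediate: $v_2$ is not reduced unless it is empty, since a reduced representative of $1$ in a free group is the empty word, so we may repeatedly cancel adjacent inverse pairs inside $v_2$ until nothing remains). Performing this sequence of reductions on $w$ yields a word of the form $v_1 x_i^{\pm 1} x_i^{\mp 1} v_3$; one further reduction cancels the distinguished pair, so they are partners in this reduction sequence. After that, we extend to a full reduction sequence arbitrarily.

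The main subtlety is the invariance argument in the ``only if'' direction, which relies on the fact that reductions are local: an elementary reduction removes two adjacent letters, and so it does not change the relative order of the remaining letters of $w$, nor does it allow a letter originally to the left of the distinguished $x_i^{\pm 1}$ to end up to its right (while $x_i^{\pm 1}$ is still present). I would formalise this by tracking, for any letter $\ell$ of $w$ that is still present after some reductions, the set of original letters lying strictly between $\ell$ and the distinguished $x_i^{\pm 1}$; this set can only shrink under reductions, and shrinks by exactly two elements each time a cancellation occurs strictly between $\ell$ and $x_i^{\pm 1}$. Applied to $\ell = x_i^{\mp 1}$ the distinguished partner, this forces all letters of $v_2$ to pair internally, which is the content of the claim.
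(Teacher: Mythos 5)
Your proof is correct. The paper states Lemma \ref{lem:partner} without proof (it is offered as an easy observation), and your argument is the natural one it has in mind: for the ``if'' direction, reduce $v_2$ internally to the empty word and then cancel the distinguished pair; for the ``only if'' direction, use locality of elementary reductions to see that, up to the step where the distinguished letters cancel, both are still present, so no letter of $v_2$ can ever become adjacent to a letter of $v_1$ or $v_3$, forcing the letters of $v_2$ to cancel among themselves. The only point worth making explicit is the last inference: since two letters can cancel only when every original letter between them has already been deleted, a cancellation between two surviving letters of $v_2$ is also a cancellation of adjacent letters in the surviving subsequence of $v_2$, so these steps form a genuine reduction sequence of $v_2$ terminating in the empty word, whence $\mu(v_2)=1$; your bookkeeping with the set of original letters between a surviving letter and the distinguished one accomplishes exactly this.
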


We associate an arc diagram $P$ labeled by $\Sscr$ to a reduction sequence for $w$ with the set $S$ of marked points corresponding to the symbols in $w$.
With this convention~$P$ consists of a set of {arc}s and vertical half lines in the upper half plane connecting each symbol either with its partner, or else with $\infty$ if it stays single. 
Depending on the situation, the intervals are oriented from $x_i$ to $x_i^{-1}$, from $x_i$ to $\infty$ or from $\infty$ to~$x_i^{-1}$. We label each segment by the
 element of $\Sscr$ it is associated with.
We illustrate this for the reduction sequence
\[
x_1 x^{-1}_2 x_2  x_1^{-1}x_3\r x_1x_1^{-1}x_3\r x_3
\] 
whose corresponding arc diagram is given in Figure \ref{fig:diagram}.
\begin{figure}
\diagram
\caption{}
\label{fig:diagram}
\end{figure}

Note
that an arc diagram depends on the full reduction sequence and not only on the
initial word as we can see by considering the reduction sequences
\begin{align*}
\underline{xx^{-1}}x&\r x,\\
x\underline{x^{-1}x}&\r x.
\end{align*}
However, the following lemma is easy to prove:
\begin{lemma}
Arc diagrams corresponding to different reduction sequences of the same word can be transformed into each other using the ``mutation'' operations given in Figure \ref{fig:transform} where
the green dashed intervals represent barriers that are not intersected by other arcs.
\begin{figure}
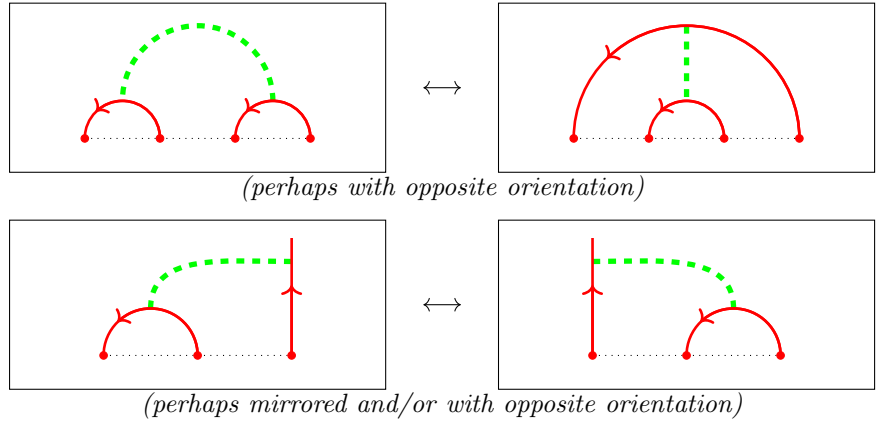

\transformi
\\
(perhaps with opposite orientation)\\[2mm]
\transformii
\\
(perhaps mirrored and/or with opposite orientation)
\caption{Mutations of reduction sequences}
\label{fig:transform}
\end{figure}
\end{lemma}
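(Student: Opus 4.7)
The plan is to prove the lemma by analyzing how two reduction sequences for $w$ can differ, and checking that each ``elementary'' difference corresponds to one of the mutations in Figure \ref{fig:transform}. Both sequences end at the same reduced word $\tilde{w}$, so they have the same length. Since the free-reduction rewriting system is terminating and locally confluent, a Newman-type argument reduces the problem to showing that any two reduction sequences can be connected by a chain of local moves: swaps of consecutive reductions, together with critical-pair resolutions when the two sequences choose overlapping cancellations.

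Two consecutive cancellations acting on disjoint position pairs can be safely swapped, producing the same partnerings and thus the same arc diagram up to isotopy. The non-trivial overlap patterns are: \textbf{(a)} a subword $x_i^{\epsilon} x_i^{-\epsilon} x_i^{\epsilon} x_i^{-\epsilon}$ at positions $(p,p{+}1,p{+}2,p{+}3)$, where one sequence first cancels the two outer pairs and the other first cancels the middle pair (which then forces cancellation of the two outermost symbols); and \textbf{(b)} a subword $x_i^{\epsilon} x_i^{-\epsilon} x_i^{\epsilon}$ at positions $(p,p{+}1,p{+}2)$, where one sequence cancels $(p,p{+}1)$ leaving position $p{+}2$ to be paired later (or to stay single), while the other cancels $(p{+}1,p{+}2)$ leaving position $p$ in the analogous role. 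In case \textbf{(a)} the two partnerings differ by swapping ``two adjacent arcs'' with ``two nested arcs'' of the same color $x_i$, which is the top mutation in Figure \ref{fig:transform}. In case \textbf{(b)}, if the surviving outer symbol eventually finds a partner, the two partnerings differ by a nested-versus-adjacent configuration and again fall under the top mutation; if the surviving outer symbol stays single (paired with $\infty$), the two partnerings differ by replacing an arc by a vertical half-line and vice versa, which is the bottom mutation. The dashed green barrier in Figure \ref{fig:transform} encodes the requirement that the intermediate region be free of arcs terminating outside it, and this holds by Lemma \ref{lem:partner}: any subword cancelled strictly between the competing partners must reduce to the identity, so the arcs it produces remain confined inside the barrier and the mutation is purely local.

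The main obstacle is the confluence/Newman-type argument underlying the first paragraph, which must justify that the above local moves suffice to connect any two reduction sequences. Making this rigorous requires a careful critical-pair analysis verifying that patterns \textbf{(a)} and \textbf{(b)} are the only overlaps in the free-group reduction system, and that every other discrepancy between reduction sequences propagates, via a finite chain of disjoint swaps and instances of \textbf{(a)} and \textbf{(b)}, down to reduction sequences that agree step-by-step. Once that is in place, each local difference has been matched with a mutation from Figure \ref{fig:transform}, and the arc diagrams of the two original sequences are connected by a finite sequence of such mutations as required.
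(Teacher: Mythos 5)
The paper states this lemma without proof ("the following lemma is easy to prove"), so there is no argument of the paper to compare yours with; judged on its own, your proposal identifies the right local moves but does not execute the step that actually constitutes the proof. Your matching of local discrepancies to Figure \ref{fig:transform} is correct: an overlap $x_i^{\epsilon}x_i^{-\epsilon}x_i^{\epsilon}$ yields the bottom mutation when the surviving letter stays single and the top mutation when it gets matched, your pattern (a) is a composite of such overlaps, and disjoint cancellations commute without changing the partnering. What is missing is precisely what you flag as "the main obstacle": a proof that any two reduction sequences are connected by a chain of disjoint swaps and overlap resolutions along which the induced partnerings change only by the two mutations. Newman's lemma gives uniqueness of the reduced word, but it does not by itself produce such a chain, so as written the decisive step is asserted rather than proved.

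There is also a concrete issue hidden in your case (b): when the two sequences diverge on an overlap $x_i^{\epsilon}x_i^{-\epsilon}x_i^{\epsilon}$ at positions $p,p{+}1,p{+}2$, the surviving occurrence sits at $p{+}2$ in one sequence and at $p$ in the other, and nothing guarantees that its eventual partner, or even its matched/single status, is the same in both; so the two final diagrams are in general \emph{not} related by a single mutation, contrary to what your description suggests. The standard repair is an induction on the length of the word: if both sequences start with the same cancellation, induct (mutations of the shorter diagram lift after re-inserting the cancelled innermost arc, which can be kept small enough not to obstruct barriers); if they start with disjoint cancellations, compare each with a sequence performing both first; if they start with an overlap, both continuations are reduction sequences of the same word, the inductive hypothesis connects their diagrams by mutations, and only then does one apply a single top or bottom mutation exchanging the arc $(p,p{+}1)$ with survivor at $p{+}2$ for the arc $(p{+}1,p{+}2)$ with survivor at $p$ --- at that point the survivor's continuation is literally identical on both sides, which removes the difficulty above. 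Note finally that the barrier condition for this last move follows from the fact that the matching produced by one reduction sequence is non-crossing and nested (everything strictly between two partners is matched inside that interval), rather than from Lemma \ref{lem:partner}, which only concerns potential partners in some sequence. With the induction organized this way your outline becomes a complete proof; without it, the argument has a genuine gap.
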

\subsection{Arc diagrams associated with  sequences of  conjugates of generators  of free groups}
Let $t=(t_1,\ldots,t_n)=(A_1x_{i_1}A_1^{-1},\ldots, A_n x_{i_n} A_n^{-1})\in F_m^n$.
We identify $t_i$ with the corresponding reduced
word in $\Sscr^{\pm\ast}$ and we consider the concatenated word $\tilde{t}=t_1\cdots t_n\in \Sscr^{\pm\ast}$. We associate an arc diagram $P$ labeled by $\Sscr$ to $t$ as follows.
\begin{itemize}
\item We ``pre-mark'' points on the horizontal axis, labeled by the symbols in $\tilde{t}$.
\item The actual marked points are the ones labeled by $(x_{i_j})_{j=1,\ldots,n}$ in $t$ (i.e.\ the middle symbols of the $(t_j)_j$).
\item $P$ is the union of an arc diagram for $\tilde{t}$ (after choosing a reduction sequence for $\tilde{t}$) and the set of {arc}s in the lower half plane connecting the corresponding symbols in $A_k$ and $A_k^{-1}$ with the  orientation going from $x_i^{-1}$  to $x_i$. 
\item We label every path in $P$ by the element of $\Sscr$ it is associated with.
\end{itemize}
Below we refer to the {arc}s in the upper and lower half plane as \emph{upper and lower {arc}s}.
\begin{example} For
\[
\def\cdot{}
t=(x_2x_1\cdot x_2\cdot x_1^{-1}x_2^{-1}, x_2, x_1, x_2^{-1}x_1^{-1}x_2^{-1}\cdot x_1\cdot x_2x_1x_2)
\]
the corresponding arc diagram is given in Figure \ref{fig:paths}.
\begin{figure}
\begin{minipage}[c]{0.49\textwidth}
\paths
\caption{}
\label{fig:paths}
\end{minipage}
\hfill
\begin{minipage}[c]{0.49\textwidth}
\pathsother
\caption{}
\label{fig:pathsother}
\end{minipage}
\end{figure}
In this example the  arc diagram has only half lines. This is not the most general situation as one sees by considering
\[
t=(x_1, x_1x_2x_1^{-1}).
\]
In this case the arc diagram is Figure \ref{fig:pathsother} and it also has full lines.
\end{example}
In general we have half lines and full lines but no more as the following lemma shows.
\begin{lemma}
The arc diagram associated to a tuple of conjugates of generators as above has no loops or intervals.
\end{lemma}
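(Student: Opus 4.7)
My plan is to analyze the local degree of $P$ at each vertex and then argue by contradiction, using a sign/parity argument for intervals and a more delicate trivial-product argument for loops.

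First I would observe the local structure at each vertex of $P$: at a non-middle pre-marked point, the upper arc from the chosen reduction of $\tilde t$ and the lower arc for the unique symmetric partner within its $t_k$ both meet (degree two); at a middle marked point $x_{i_k}$ only an upper arc meets, since the middle has no symmetric partner for a lower arc (degree one); and $\infty$ is where all half-lines terminate. Hence each connected component of $P$ is either a path whose endpoints lie at middles or at $\infty$, or else a closed loop. In these terms an ``interval'' component has both endpoints at middles and a ``loop'' component has no endpoints.

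To rule out intervals I would use the observation that every arc of $P$ connects a position whose letter is a positive generator $x_s$ to a position whose letter is $x_s^{-1}$: this is clear for upper arcs (partners in a reduction must be inverse letters) and for lower arcs (the lower arc in $t_k$ connects $d_q$ at position $q$ of $A_k$ with its inverse $d_q^{-1}$ at the symmetric position of $A_k^{-1}$). Therefore the sign of the letter alternates at successive vertices of a component. A hypothetical interval from middle $x_{i_k}$ to middle $x_{i_{k'}}$ has $r$ lower arcs and $r+1$ upper arcs, i.e.\ $2r+1$ arcs in total; but both endpoint letters $x_{i_k},x_{i_{k'}}$ are positive, which is incompatible with an odd number of sign-flips, giving the contradiction.

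To rule out loops I first establish as a sublemma that no upper arc lies entirely within a single $t_k$. Indeed, by Lemma \ref{lem:partner}, such an upper arc would force a nonempty subword of $t_k$ to have trivial product in $F_m$. But $t_k=A_k x_{i_k} A_k^{-1}$ is reduced and $A_k$ neither begins nor ends with $x_{i_k}^{\pm1}$, so any nonempty subword has nontrivial product: a subword lying in $A_k$ or $A_k^{-1}$ alone is a nonempty reduced word, while a subword spanning $x_{i_k}$ equals, after cancellation of matched letters of $A_k$ and $A_k^{-1}$, a nontrivial conjugate of $x_{i_k}$ possibly multiplied on one side by a prefix or suffix of $A_k^{\pm1}$. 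Given this sublemma, any hypothetical loop $\gamma$ alternates between lower arcs inside single $t_k$'s and upper arcs connecting distinct $t_k$'s.

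The hardest part — which I expect to be the main obstacle — is closing the loop case. I would do this by focusing on the leftmost vertex $v$ of $\gamma$. Both arcs at $v$ must go to the right, so $v$ lies in the left half $A_k$ of its $t_k$, and the upper arc at $v$ lands at some $v'\in t_{k'}$ with $k'>k$. Writing out the trivial-product condition for this upper arc in terms of the explicit forms $t_j=A_j x_{i_j} A_j^{-1}$ of the intervening blocks, one obtains an equation in $F_m$ of the shape
\[
(B x_{i_k} B^{-1}) \cdot A^{-1} \cdot t_{k+1}\cdots t_{k'-1} \cdot H \;=\; 1,
\]
with $A_k=AB$ and $H$ a prefix of $t_{k'}$. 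Because all the middles $x_{i_j}$ are \emph{positive} generators of $F_m$ and each $t_j$ is reduced, the non-cancellation structure of the free group will rule out this equation, completing the contradiction.
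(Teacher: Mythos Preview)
Your treatment of intervals is correct and in fact more explicit than the paper's, which dismisses that case in one line (``trivially excluded'') with the parity reasoning relegated to a footnote. Your sublemma that no upper arc stays within a single $t_k$ is also correct, though the justification is overcomplicated: since $t_k$ is a reduced word, every nonempty subword is reduced and hence represents a nontrivial element of $F_m$; the remarks about $A_k$ not beginning or ending with $x_{i_k}^{\pm1}$ are unnecessary (and the ``not beginning'' part is not even guaranteed).

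The genuine gap is in the loop case. You set up the equation
\[
(B x_{i_k} B^{-1})\cdot A^{-1}\cdot t_{k+1}\cdots t_{k'-1}\cdot H=1
\]
and then assert that ``the non-cancellation structure of the free group will rule out this equation''---but you never carry this out, and it is not clear that a purely algebraic argument from this equation alone succeeds. The equation encodes only the linear order of the pre-marked points (via ``leftmost'') and says nothing about the \emph{planarity} of the arc diagram, which is what actually obstructs loops. Without exploiting planarity you would need to rule out this free-group identity for all choices of $A,B,H$ and intervening $t_j$'s simultaneously, and you have given no mechanism for doing so.

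The paper's argument is entirely different and genuinely topological. It observes that a loop cannot pass through any middle $x_{i_j}$ (those have degree one), and hence the region enclosed by a loop cannot contain any middle either. Taking a loop whose enclosed region is \emph{minimal}, one finds a lower arc $\alpha$ inside it; the endpoints of $\alpha$ are connected through the loop by another arc $\beta$, and by minimality every segment trapped between $\alpha$ and $\beta$ must carry the same colour. This forces two adjacent lower segments of the same colour with opposite orientation, i.e.\ a subword $x_s^{\pm1}x_s^{\mp1}$ inside some $t_j$, contradicting reducedness. If you want to salvage your approach, you will need to inject planarity somewhere---most likely by an innermost-arc argument of this kind rather than by staring at a single free-group equation.
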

\begin{proof} Intervals are trivially excluded.\footnote{If we would more generally consider tuples of the form $t=(A_1x_{i_1}^{\pm 1}A_1^{-1},\ldots, A_n x_{i_n}^{\pm 1} A_n^{-1})$
then we could also have intervals. If we would allow the $t_i$ to be non-reduced words then loops would occur.}
A loop if it exists cannot contain one of the~$x_{i_j}$ since these are not endpoints of lower {arcs}.
It follows that the area enclosed by a loop can also not contain an $x_{i_j}$ (since this would have to be part of a loop as well).
Furthermore the areas enclosed by loops are ordered by inclusion. Let us consider a loop whose enclosed area is minimal.
It contains a lower {arc} $\alpha$ as in Figure \ref{fig:loopi}.
\begin{figure}
\loopi
\caption{}
\label{fig:loopi}
\end{figure}
Since the area enclosed by the path cannot contain $x_{i_j}$ 
the points $p$ and $q$ must be connected to each other via a path containing
a segment like $\beta$ (with perhaps $\beta$ being outside of $\alpha$) with the area between $\alpha$ and $\beta$  inside the loop.
 However, by
the minimality hypothesis on the loop, all segments between $\alpha$ and $\beta$ must be red (differently colored segments would forcibly belong to a smaller loop). It follows that there will be some adjacent red segments with opposite
orientation. However this is impossible since we have assumed that $t_j$ (viewed as a word) is reduced.
\end{proof}
In the situation of \eqref{eq:tuples} there are also no full lines.
\begin{lemma} \label{lem:half_lines} Assume $\mu(t)$ contains no strictly negative powers of the $x_i$. Then the arc diagram of $t$ has only half lines.
\end{lemma}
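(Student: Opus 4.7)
The plan is to combine the preceding lemma, which already rules out loops and intervals, with a sign-parity argument to exclude the only remaining non-half-line possibility, namely a component of $P$ whose two endpoints both lie at infinity.

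First I would analyse the local structure of $P$ at each symbol position of $\tilde t$. Every non-marked position lies in some $A_k$ or $A_k^{-1}$, and therefore carries one lower endpoint (from the lower arc pairing) together with one upper endpoint (part of an upper arc to its reduction partner, or a half line to infinity if the position survives the reduction of $\tilde t$), whereas marked positions carry only an upper endpoint. Consequently a component of $P$ with both endpoints at infinity must be a chain $\infty = p_0, p_1, \dots, p_r, p_{r+1} = \infty$ with all $p_i$ non-marked, whose edges alternate between upper and lower and whose first and last edges are upper half lines. This forces $r+1$ to be odd, so $r = 2k$ with $k \ge 1$, and the two positions on the chain that survive the reduction of $\tilde t$ are exactly $p_1$ and $p_{2k}$.

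Next I would invoke the sign-flip observation: every edge of $P$ reverses the sign of the associated symbol, since an upper arc joins partners $x_s$ and $x_s^{-1}$, while a lower arc joins the $i$-th entry of some $A_k$ to its inverse in $A_k^{-1}$. The signs of $p_1, p_2, \dots, p_{2k}$ therefore strictly alternate, so $p_1$ and $p_{2k}$ carry opposite signs; as both are survivors, both appear as symbols of $\mu(t)$, whence at least one occurs as a strictly negative power, contradicting the hypothesis. The hard part is really just the bookkeeping of the first step -- recognising that every non-marked position is a degree-two ``through point'' of $P$ and that this forces strict alternation of upper and lower edges along every component -- because once this is in place the parity of $r$ and the sign-flip contradiction follow immediately.
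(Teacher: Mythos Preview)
Your argument is correct, and it reaches the same contradiction as the paper's proof, but by a more elaborate route. The paper observes in one line that a full line, being an oriented $1$-manifold with both ends at $\infty$, must have one end where the orientation points \emph{away} from $\infty$; by the orientation conventions this end is a vertical segment from $\infty$ to some surviving symbol $x_i^{-1}$, which immediately contradicts the hypothesis on $\mu(t)$. Your sign-alternation and parity count are really a local re-derivation of this global orientation fact: each edge flips the sign precisely because the local orientations are chosen to glue consistently along the whole component. So the bookkeeping you identify as ``the hard part'' (degree-two through points, alternation of upper/lower edges, parity of $r$) is not needed once one notices that the component already carries a coherent orientation; nothing in your argument is wrong, it is just longer than necessary.
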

\begin{proof} This is clear since a full line contains a half line oriented from $\infty$ to $x_i^{-1}$. By definition then $x_i^{-1}$ stays single in $
\tilde{t}$. But this contradicts
our assumption on $\mu(t)$.
\end{proof}
One may verify the following:
\begin{lemma}
\label{lem:red2}
A tuple $t$ as above can be uniquely reconstructed from a corresponding  arc diagram.
\end{lemma}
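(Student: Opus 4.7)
The plan is to exhibit an explicit reconstruction procedure recovering $t=(A_1 x_{i_1} A_1^{-1},\ldots,A_n x_{i_n} A_n^{-1})$ from its arc diagram $P$. First, from the labels and orientations of the components of $P$ alone, one recovers the concatenated word $\tilde{t}\in\Sscr^{\pm\ast}$ of signed generators read along the horizontal axis: each pre-marked point is incident to at least one arc (namely its upper arc or half-line coming from the reduction of $\tilde{t}$), whose color yields the letter $x_i$ and whose orientation at that endpoint forces the sign — the point is labeled $x_i$ if the incident upper arc is oriented away from it and $x_i^{-1}$ if oriented toward it, with the analogous convention applied at $\infty$. This produces the ordered sequence of signed letters of $\tilde{t}$ unambiguously.

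The next step is to split $\tilde{t}$ into the $n$ blocks $t_k$. The key observation is that the middle points $x_{i_k}$ are precisely those pre-marked points that are \emph{not} incident to any lower arc, since by construction the lower arcs only connect pairs of matching symbols inside some $A_k$ and $A_k^{-1}$, and never touch a middle letter. Locating the $n$ middle points in this way partitions the horizontal axis into $n$ segments, but within each such segment there is still an a priori ambiguity: between consecutive middle points $x_{i_k}$ and $x_{i_{k+1}}$ one must determine where the tail $A_k^{-1}$ of $t_k$ ends and where the head $A_{k+1}$ of $t_{k+1}$ begins.

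This ambiguity is resolved by the nesting structure of the lower arcs. By the construction of $P$, the $j$-th symbol of $A_k$ is connected by a lower arc to the $(l(A_k)-j+1)$-th symbol of $A_k^{-1}$; hence every lower arc associated to $t_k$ straddles $x_{i_k}$, and these arcs form a planar nest around $x_{i_k}$. In particular the two endpoints of any lower arc lie within a common block, so no lower arc can straddle two distinct middle letters. Consequently a lower arc belongs to $t_k$ if and only if it straddles $x_{i_k}$, and reading the left (respectively right) endpoints of these arcs from outermost to innermost, together with the signs and letters already recovered in the first step, reconstructs the words $A_k$ (respectively $A_k^{-1}$) letter by letter.

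Combining the three steps yields $t$ unambiguously from $P$. The proof is purely combinatorial and mechanical; the only mild subtlety is being scrupulous about the translation between the orientations of arcs and half-lines on the one hand and the signs of generators on the other, but this is dictated entirely by the conventions fixed in \S\ref{sec:arcdiagram} and \S\ref{sec:arcdiagramfree}, so I do not anticipate any real obstacle.
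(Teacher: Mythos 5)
Your reconstruction is correct: reading the signed letters of $\tilde{t}$ from the colors and orientations at the points where $P$ meets the axis, identifying the middle letters as the axis points not incident to a lower arc, and assigning each lower arc to the unique middle letter it straddles does recover the blocks $A_k x_{i_k} A_k^{-1}$ unambiguously, which is exactly the verification the paper leaves to the reader ("one may verify"). No gaps; this is the intended argument, just spelled out.
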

\begin{remark}
\label{rem:homotopy}
We  observe that it is sufficient to know an arc diagram up to ambient isotopy to
reconstruct it.
 To see this we note that homotopy classes of paths
can be uniquely represented in a  minimal (!) way by segments consisting of upper and lower {arc}s, as well as vertical segments. In Figure \ref{fig:minimal} we have on the left a standard
arc diagram and on the right a minimal version. 
It is clear  how to go from the picture on the right to the picture on the left.
\begin{figure}
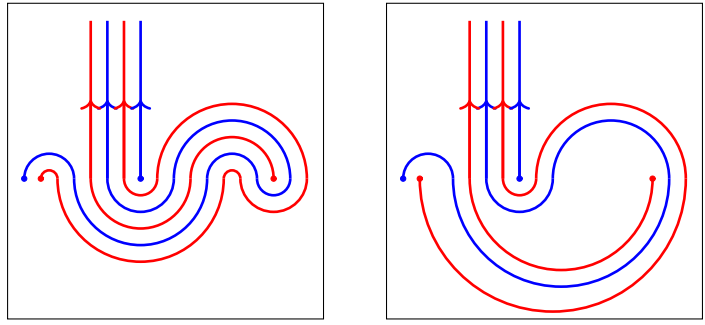

\reducedi \qquad \reducedii
\caption{An arc diagram (left) and its minimal version (right)}
\label{fig:minimal}
\end{figure}
\end{remark}
We also note the following. 
\begin{lemma} \label{lem:uniqueness}
Assume $m=n$. Then the  arc diagram associated to $t$ does not depend on the reduction sequence chosen for $\tilde{t}$.
\end{lemma}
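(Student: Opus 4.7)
The plan is to reduce the statement to an ambiguity-free combinatorial claim. Call a contiguous subword of the form $x_l^{\pm 1} x_l^{\mp 1} x_l^{\pm 1}$ an \emph{ambiguous triple}. The first observation would be that two reduction sequences of $\tilde t$ yield different pairings if and only if, at some intermediate stage, they make distinct choices between two \emph{overlapping} cancellations, and two cancellations overlap precisely when the three letters involved form an ambiguous triple. Conversely, if at every stage all available cancellations are pairwise disjoint, then a standard diamond-type argument shows that all reduction sequences yield the same pairing. Hence the lemma reduces to showing that no ambiguous triple ever arises in any word obtained from $\tilde t$ by free reductions.

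I would attempt this by induction on the number of cancellations performed. For the base, an ambiguous triple in $\tilde t = t_1 \cdots t_n$ must straddle a boundary between two consecutive factors $t_k$, $t_{k+1}$, since each $t_k = A_k x_{i_k} A_k^{-1}$ is individually reduced; a short case analysis on the tail of $A_k^{-1}$ and the head of $A_{k+1}$, together with the reducedness constraint that $x_{i_k}$ differs from both the last letter of $A_k$ and its inverse, rules out each configuration. For the inductive step, if an ambiguous triple first appears after some cancellation, its three letters can be traced back to three specific positions of $\tilde t$, and Lemma~\ref{lem:partner} yields that the subwords of $\tilde t$ separating them both reduce trivially. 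Translating this structural data into the combined upper--lower arc graph should force a closed loop, contradicting the earlier lemma that such arc diagrams have no loops.

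The hypothesis $m = n$ enters this inductive step as a rigidity constraint: with as many free generators as tuple entries, each generator's contribution to $\tilde t$ can be analysed independently, and the forced loop becomes unambiguous. The main obstacle I anticipate is the bookkeeping in this inductive step---tracking the origin of each letter of a putative ambiguous triple through a chain of earlier cancellations and matching it against the fixed lower arcs. A cleaner route may be to argue directly at the level of monochromatic connected components of the combined arc graph: under the assumption $m = n$, these components have a rigid endpoint structure determined by middle letters and half-lines, which pins down the planar matching uniquely.
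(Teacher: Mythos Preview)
Your reduction to showing that no ambiguous triple ever arises is a reasonable strategy, but the execution has real gaps. First, the base case is incomplete: you claim an ambiguous triple in $\tilde t$ must straddle a single boundary between consecutive factors, but if $A_k=1$ (so $t_k=x_{i_k}$ has length~$1$) the triple can straddle \emph{two} boundaries --- take $t_{k-1}$ ending in $x_{i_k}^{-1}$ and $t_{k+1}$ beginning with $x_{i_k}^{-1}$ --- and nothing in your case analysis on ``the tail of $A_k^{-1}$ and the head of $A_{k+1}$'' rules this out. Second, and more seriously, the inductive step is only a wish: you say that tracing an ambiguous triple back to three positions of $\tilde t$ and invoking Lemma~\ref{lem:partner} ``should force a closed loop'', but the upper arcs of the diagram depend on the very reduction sequence whose uniqueness is in question, so it is unclear in which graph the putative loop lives or why it must close up.

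The paper's argument sidesteps both difficulties. It has already established a mutation lemma: any two arc diagrams for $\tilde t$ are related by a sequence of the local moves in Figure~\ref{fig:transform}. Each such move involves two adjacent upper segments of the same colour. Under $m=n$ every colour labels a unique path, so those two segments already lie on a single path; one then checks by hand that performing the move on a single path creates a loop on one side or the other, contradicting the no-loops lemma. Your closing remark about ``monochromatic connected components'' is exactly this idea --- the missing ingredient is to organise the argument around the mutation lemma rather than an induction on cancellations.
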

\begin{proof} If $m=n$ then the segments in the diagrams in the rows of Figure \ref{fig:transform} must belong to the same path since by the hypothesis $m=n$,
every path has a different color($=$label). One quickly verifies that this is impossible.
E.g. if in the top row in the left hand side of Figure \ref{fig:transform} the end of the first segment is connected to the start of the second segment then this creates a loop in the right hand side.
\end{proof}
\subsection{Non-self adjacency}
In this section we assume that $P$ is an arc diagram which consists entirely of half lines and whose boundary is $S$.
\begin{lemma} \label{lem:isotopy} Let $L\subset P$ be a half line with
  starting point $p\in S$.  Assume that there are distinct points
  $q,r\in L$ and an interval $I\subset \RR^2$ such that
  $I\cap P=\{q,r\}$.  Let $L'$ be a half line obtained by changing the
  segment $J$ in $L$ bounded by $q,r$ into $I$ and let $P'$ be the arc
  diagram obtained from $P$ by replacing $L$ with $L'$. Then $P$
  can be isotoped into $P'$.
\end{lemma}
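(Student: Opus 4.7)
The plan is to apply the Jordan curve theorem to the loop $\gamma := J \cup I$, which is a simple closed curve in $\RR^2$: since $J \subset L$ and $I$ meet only at their common endpoints (the hypothesis $I \cap P = \{q,r\}$ forces $I \cap J = \{q,r\}$), $\gamma$ has no self-intersections. By Jordan--Schoenflies, $\gamma$ bounds a closed topological disk $B \subset \RR^2$ with bounded open interior $B^\circ$. The goal is to prove that $B^\circ \cap P = \emptyset$; granted this, a standard disk-swap ambient isotopy supported in a thin neighborhood $N$ of $B$, chosen to meet $P$ only in $J$, fixes $q$ and $r$ and slides $J$ across $B$ onto $I$. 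Such an isotopy has compact support, so the vertical-ray behaviour of $P$ at infinity is preserved, and by construction it sends $L$ to $L'$ and $P$ to $P'$.

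To establish $B^\circ \cap P = \emptyset$ I will argue in three steps. First, no connected component of $P$ is contained in $B^\circ$, because each such component is a half line that leaves every compact set, while $B^\circ$ is bounded. Second, for any component $L'' \neq L$ that meets $B^\circ$: since $P$ is an embedded $1$-submanifold its distinct components are pairwise disjoint, so $L'' \cap J = \emptyset$, while the hypothesis gives $L'' \cap I \subseteq \{q,r\}$; hence $L''$ meets $\partial B = \gamma$ only in the finite set $\{q,r\}$. A portion of $L''$ lying in $B^\circ$ can therefore neither exit $B$ (it would have to cross $J$ or $I$) nor close up (a half line is not a loop), so $L''$ would be trapped in the bounded set $B$, contradicting that $L''$ is unbounded. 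Third, the sub-arc $L \setminus J$ cannot enter $B^\circ$: any crossing of $\partial B$ from outside would traverse $J$, making $L$ self-intersect, or $I$, violating $I \cap P = \{q,r\}$.

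The step I expect to be most delicate is the second one in the degenerate situation where $q$ or $r$ coincides with the $S$-endpoint $p$ of $L$, because then other half lines of $P$ are allowed to emanate from the same marked point and could locally head into $B^\circ$; one has to verify that the trapping argument still applies at a shared corner of $\partial B$, using that any such companion half line must leave every compact set yet cannot cross $\gamma$ except at $q, r$. Once $B^\circ \cap P = \emptyset$ is established, the construction of the isotopy itself --- take $N$ to be a sufficiently small closed tubular neighborhood of $B$ disjoint from $P \setminus J$, and use the standard fact that any two arcs in a closed $2$-disk with the same endpoints on the boundary are ambient isotopic rel boundary --- is a routine exercise in planar topology.
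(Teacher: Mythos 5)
Your key claim that $B^\circ\cap P=\emptyset$ is false in general, and the failure is exactly the case the paper singles out. The bigon $B$ bounded by $I\cup J$ may contain the marked point $p$ itself together with the initial segment of $L$ running from $p$ to $q$ (or to $r$, whichever comes first along $L$): that segment cannot cross $J$ (embeddedness of $L$) or $I$ (since $I\cap P=\{q,r\}$), so it lies entirely on one side of $\gamma$, and nothing forces that side to be the outside. Your third step only rules out $L\setminus J$ \emph{crossing} $\partial B$ from outside; it does not rule out $L\setminus J$ \emph{starting} inside $B^\circ$ at $p$ and never crossing $\partial B$ at all. (It is easy to draw such a configuration: let $L$ dip below $p$ and come back up, take $q$ on the descending part and $r$ on the tail, and let $I$ pass above $p$.) The delicate case you flag --- $q$ or $r$ coinciding with $p$, or several half lines sharing a marked point --- is not the real issue: distinct components of $P$ are disjoint, so no other half line starts at $p$, and your steps one and two correctly show that no marked point \emph{other than} $p$ and no other component can lie in $B$. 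The blind spot is $L$'s own initial arc.

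The paper's proof is your argument plus precisely this missing case: it observes that $B$ contains no marked point different from $p$ (your steps one and two), concludes immediately when $B$ contains no marked point, and then treats the case $p\in B$ separately via Figure \ref{fig:isotopy}. To repair your proof you need the additional observation that when $p\in B^\circ$ the only part of $P$ inside $B$ is the slit consisting of $p$ and the arc of $L$ from $p$ to $q$, attached to $\partial B$ at the common endpoint $q$ of $I$ and $J$; since a disk minus such a boundary-attached slit is still simply connected, $J$ can be isotoped onto $I$ rel $\{q,r\}$ by sliding it around the slit, and this isotopy extends to an ambient isotopy fixing $S$ and $P\setminus L$. Without this the disk-swap step of your argument does not go through.
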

\begin{proof}
Let $B$ be the region bounded by the bigon $I\cup J$. $B$ cannot contain any marked points different from $p$, since their attached half lines would not be able to escape
from $B$.

If $B$ does not contain any marked points then it is clear that we can deform $J$ into $I$ and hence $L$ into $L'$. The case where $p\in B$ is illustrated in Figure \ref{fig:isotopy}.
\begin{figure}
\isotopy
\caption{}
\label{fig:isotopy}
\end{figure}
\end{proof}
Assume that $C_1,C_2$ are either both upper or both lower distinct arcs contained in $P$. We say that $C_1$ and $C_2$ are \emph{adjacent} if there is an interval $I\subset \RR^2$,
not intersecting the horizontal axis, which connects
$C_1$ and $C_2$ and does not intersect $P$ elsewhere. 
\begin{lemma} (Non-self adjacency) 
\label{lem:nonadjacency} Let $L\subset P$ be a half line represented in a minimal way as in Remark \ref{rem:homotopy}. Then $L$ does not contain adjacent arcs.
\end{lemma}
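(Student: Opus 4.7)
The plan is to argue by contradiction using the isotopy principle from Lemma \ref{lem:isotopy} together with the fact that a minimal representation cannot be further simplified. Suppose $L \subset P$ is a half line, represented minimally as in Remark \ref{rem:homotopy}, and assume that $L$ contains two distinct adjacent arcs $C_1, C_2$, both upper (the lower case being symmetric). Let $I$ be the interval witnessing adjacency, with $I \cap P = \{q, r\}$, where $q \in C_1$ and $r \in C_2$. Since $q, r \in L$, and $L$ is homeomorphic to $[0, +\infty[$, there is a unique compact subsegment $J \subset L$ with endpoints $q$ and $r$.

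The first key step is to verify the hypothesis of Lemma \ref{lem:isotopy}: that $I \cap P = \{q,r\}$, which is exactly the definition of adjacency. Applying Lemma \ref{lem:isotopy}, the diagram $P$ is ambient isotopic to the diagram $P'$ obtained by replacing the segment $J$ of $L$ with $I$, producing a new half line $L'$. Thus $L$ and $L'$ represent the same homotopy class relative to the marked points.

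The second key step is to check that $L'$, once put in its minimal form, uses strictly fewer upper arcs than the minimal representation of $L$. Indeed, after the replacement, the interval $I$ lies in the upper half plane and directly connects the truncated ends of $C_1$ and $C_2$; in the minimal representation of $L'$, these three pieces merge into (at most) two upper arcs, whereas the segment $J \subset L$ they replace contained the remaining portions of $C_1$ and $C_2$ separated by at least one additional lower arc or vertical descent (since two consecutive upper arcs of a minimally represented path must be joined by some lower structure). Hence $L'$ has strictly fewer upper arcs than $L$, contradicting the assumed minimality of $L$'s representation.

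The main obstacle is the bookkeeping in this second step: one must carefully compare the alternating sequence of upper arcs, lower arcs, and vertical segments in the minimal representation of $L$ with that of $L'$ after the bigon cancellation, and verify that collapsing the bigon cannot merely shift complexity (e.g. trade upper arcs for lower arcs) but in fact strictly reduces the total number of constituent segments. This follows by observing that the region between $I$ and $J$ contains no marked points (by the same argument as in Lemma \ref{lem:isotopy}), so the portion of $J$ between $q$ and $r$ is contractible rel boundary to $I$ within the upper half plane, guaranteeing a true simplification.
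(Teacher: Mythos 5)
Your proof is correct and follows essentially the same route as the paper's: assume two adjacent arcs on $L$, invoke Lemma \ref{lem:isotopy} to replace the subsegment $J$ between the adjacency points by the interval $I$, and observe that the resulting half line has strictly fewer arcs in its minimal representation, contradicting minimality. Your second step merely spells out the arc-counting that the paper leaves to "some quick sketches," and that accounting is sound.
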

\begin{proof} Assume the lemma is false so that $I$ exists as described above. By Lemma \ref{lem:isotopy}, $L$ can be isotoped into a half line $L'$ containing $I$. One verifies with some quick sketches (there are several cases to consider) that $L'$ will have fewer arcs than $L$ in its minimal representation. This contradicts the hypothesis that the representation of $L$ was already minimal.
\end{proof}
\subsection{Pairs}
\label{sec:pairs}
\begin{lemma}
\label{lem:normalform} Assume $m=n$ and 
let $f=(h_i,h_{i+1})$ be a subpair of an element $h$ in \eqref{eq:tuples}.
Then there exist $Z,A,B\in F_m$ and $p,q\in \Sscr$ such that
\begin{align*}
X:=ApA^{-1}&= A{\ast}p{\ast} A^{-1},\\
Y:=BqB^{-1}&= B{\ast}q{\ast} B^{-1}
\end{align*}
and $f$ has one of the following forms
\begin{equation}
\label{eq:normal_form}
{\small
\begin{gathered}
Z(X,Y)Z^{-1},\\
Z(\underbrace{Y^{-1}X^{-1}\cdots Y^{-1}X^{-1}}_k Y \underbrace{XY\cdots XY}_k,   \underbrace{Y^{-1}X^{-1}\cdots Y^{-1}X^{-1}}_k  Y^{-1}XY \underbrace{XY\cdots XY}_k)Z^{-1},\\
Z((\underbrace{Y^{-1}X^{-1}\cdots Y^{-1}X^{-1}}_k  Y^{-1}XY \underbrace{XY\cdots XY}_k,\underbrace{Y^{-1}X^{-1}\cdots Y^{-1}X^{-1}}_{k+1} Y \underbrace{XY\cdots XY}_{k+1})Z^{-1},\\
Z((\underbrace{XY\cdots XY}_k XYX^{-1} \underbrace{Y^{-1}X^{-1}\cdots Y^{-1}X^{-1}}_k, \underbrace{XY\cdots XY}_k X \underbrace{Y^{-1}X^{-1}\cdots Y^{-1}X^{-1}}_k)Z^{-1},\\
Z( \underbrace{XY\cdots XY}_{k+1} X \underbrace{Y^{-1}X^{-1}\cdots Y^{-1}X^{-1}}_{k+1},\underbrace{XY\cdots XY}_k XYX^{-1} \underbrace{Y^{-1}X^{-1}\cdots Y^{-1}X^{-1}}_k)Z^{-1}
\end{gathered}
}
\end{equation}
where all products are $\ast$'s, $k\ge 0$ and where we have used the notation $Z(f_1,f_2)Z^{-1}$ as a shorthand for $(Zf_1Z^{-1},Zf_2Z^{-1})$.
\end{lemma}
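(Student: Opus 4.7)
\medskip\noindent\textbf{Proof plan.} The plan is to read off the normal forms from the arc diagram associated to $h$. Since $h$ lies in \eqref{eq:tuples}, Lemma~\ref{lem:half_lines} says the diagram $P$ of $h$ consists only of half lines, and because $m=n$ every path in $P$ carries a distinct color, so by Lemma~\ref{lem:uniqueness} the diagram $P$ is canonically attached to $h$ and its half lines may be taken in the minimal representation of Remark~\ref{rem:homotopy}. I will analyze the portion of $P$ incident to the word $h_i h_{i+1}$, letting $p:=x_{i_k}$ and $q:=x_{i_{k+1}}$ be the two middle letters and writing $h_i=A\ast p\ast A^{-1}$, $h_{i+1}=B\ast q\ast B^{-1}$ with nested lower arcs implementing $AA^{-1}$ and $BB^{-1}$.

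First I would separate the upper arcs meeting $h_ih_{i+1}$ into \emph{external} arcs (with the other endpoint outside $h_ih_{i+1}$, possibly at $\infty$) and \emph{internal} arcs (with both endpoints inside $h_ih_{i+1}$). The external arcs are mutually non-crossing and, after being pushed by ambient isotopy (Lemma~\ref{lem:isotopy}) against the outer boundary of the pair, form a common ``envelope'' that reads off as a word $Z\in F_m$ conjugating the whole pair. Peeling $Z$ away reduces the problem to the case $Z=1$, in which every upper arc of the pair is internal. Setting $X=A\ast p\ast A^{-1}$ and $Y=B\ast q\ast B^{-1}$, the first normal form $(X,Y)$ occurs precisely when there are no internal arcs.

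Next I would describe the internal configurations. Each internal arc necessarily has endpoints of opposite sign belonging to the same color, so it connects a letter of $A$ (resp.\ $A^{-1}$) to a letter of $B^{-1}$ (resp.\ $B$) of the same color; crossings between internal arcs are forbidden because each color occurs only once. Hence the internal arcs form a totally ordered zigzag, each successive arc ``wrapping around'' either the center of $X$ or the center of $Y$. Non-self-adjacency (Lemma~\ref{lem:nonadjacency}) applied inside each of the two half lines through $p$ and $q$ forces the wraps to strictly alternate between $X$-wraps and $Y$-wraps; moreover the alternation is anchored on a particular side of the pair, left or right, depending on whether the first and last zigzag segments fall into $A$ or into $A^{-1}$. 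This yields exactly four families, indexed by a nonnegative integer $k$ counting the number of completed alternations and by a choice of side/parity, which are precisely cases two through five of \eqref{eq:normal_form}. A direct reading of the reduced word associated to each zigzag pattern produces the stated expressions.

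\medskip\noindent\textbf{Main obstacle.} The delicate point is showing that the internal zigzag is forced to strictly alternate between $X$- and $Y$-wraps with no extra arcs running ``parallel.'' Any two consecutive internal arcs wrapping around the same middle letter would produce two adjacent arcs inside a single half line of $P$, contradicting Lemma~\ref{lem:nonadjacency}; making this argument rigorous requires carefully tracking the cyclic order of endpoints on the half lines through $p$ and $q$ and verifying that the ``bigon'' produced by a putative non-alternation is always a self-adjacency of the same colored path. Once this rigidity is established, reading off the explicit words in \eqref{eq:normal_form} from the zigzag is a routine, if tedious, case check.
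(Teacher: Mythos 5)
Your route is genuinely different from the paper's: you try to read the whole classification \eqref{eq:normal_form} directly off the ambient arc diagram, whereas the paper's argument is essentially algebraic — it chooses $m$ minimizing $l(\sigma_i^m(f))$, notes that at the minimum Lemma~\ref{lem:reduction_lemma} forces $\sigma_i^m(f)=Z(ApA^{-1},BqB^{-1})Z^{-1}$, and obtains the five families by applying $\sigma_i^{-m}$ (the four nontrivial families being the sign/parity of $m$); the arc diagram and non-self-adjacency (Lemma~\ref{lem:nonadjacency}) enter only at one delicate point, namely to exclude the degenerate case $A=1$ or $B=1$ with $YX\neq Y\ast X$ (e.g.\ the pair $(x,xyx^{-1})$), which the purely algebraic lemma cannot rule out. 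Unfortunately, your first structural step contains a genuine error: the dichotomy ``external arcs $=$ the conjugator $Z$, internal arcs $=$ the zigzag'' does not hold. In the normal form $Z(f_1,f_2)Z^{-1}$ it is the \emph{inner} occurrences $Z^{-1}Z$ (suffix of $h_i$ against prefix of $h_{i+1}$) that cancel internally, while letters of the conjugators $A$, $B$ may cancel \emph{externally} against other components $h_j$, and externally connected letters need not pair up into a common prefix/suffix at all. Concretely, take $m=n=3$, $u=(x_1,x_2,x_3)$ and $h=\sigma_2\sigma_1(x_1,x_2,x_3)=(x_2,\;x_3,\;x_3^{-1}x_2^{-1}x_1x_2x_3)$, which lies in \eqref{eq:tuples}, and consider the subpair $f=(h_2,h_3)$. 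Its normal form is the second family with $k=0$, $Z=1$, $X=x_2^{-1}x_1x_2$, $Y=x_3$; yet in the arc diagram the externally connected letters of the pair are four letters of $h_3$ (including the $x_2^{-1}$, a letter of the conjugator $A$, which cancels against $h_1$) and no letter of $h_2$. So ``peeling off'' what the external arcs suggest gives nonsense, and the subsequent reduction to ``$Z=1$, all upper arcs internal'' is unavailable. Since the entire derivation of the four alternating families rests on this decomposition, the proposal as it stands does not prove the lemma.

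If you want to keep a diagrammatic flavour, the efficient repair is to follow the paper's division of labour: use Lemma~\ref{lem:reduction_lemma} to get \eqref{eq:pair2}--\eqref{eq:pair3} at a length-minimizing $\sigma_i^m(f)$ and pull back by $\sigma_i^{-m}$ to produce the list, and reserve your diagrammatic machinery (half lines only, uniqueness of the diagram when $m=n$, and Lemma~\ref{lem:nonadjacency}) for the one remaining issue, namely showing that when $A=1$ (or $B=1$) the word $B$ cannot begin with $p$ (resp.\ $A$ with $q$) — this is exactly where your instinct that non-self-adjacency forbids two adjacent parallel arcs of the same path is the right tool, and where the hypotheses $m=n$ and $f$ a subpair of an element of \eqref{eq:tuples} are genuinely needed.
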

\begin{proof} To start we will only use that $f$ is a pair as in Lemma \ref{lem:reduction_lemma}.
Choose $m\in \ZZ$ such that $l(\sigma_i^m(f))$ is minimal. Then by Lemma \ref{lem:reduction_lemma} we have from \eqref{eq:pair2}
\begin{equation}
\label{eq:sigma}
\sigma_i^m(f)=Z( A p A^{-1},B q B^{-1})Z^{-1}.
\end{equation}
Applying $\sigma_i^{-m}$ to the right hand side of \eqref{eq:sigma} yields the expressions in \eqref{eq:normal_form}. The~4 subcases correspond to
the sign and the parity of $m$.

It remains to prove that the products in \eqref{eq:normal_form} are $\ast$'s. For the products involving $Z$ as well as the product $XY$ this follows from \eqref{eq:pair3}.
Sadly we can draw no immediate conclusion about $YX$ if $A=1$ or $B=1$ (consider the pair $(x,xyx^{-1})$). We claim however that this case cannot occur if we use all the
hypotheses in the statement of the current lemma.

Assume first $A=1$ and furthermore $YX\neq Y{\ast} X$. Then $B=p{\ast} W$ so that
\begin{equation}
\label{eq:secondcomponent}
f=(ZpZ^{-1}, ZpWqW^{-1}p^{-1}Z^{-1}).
\end{equation}
To analyze this case we give the corresponding arc diagram in Figure \ref{fig:nonecase}
(using the criterion given by Lemma  \ref{lem:partner}).  We also
use Lemma \ref{lem:uniqueness} which states that the  arc diagram is unique.
\begin{figure}
\nonecase
\caption{}
\label{fig:nonecase}
\end{figure}
Note that this is a subdiagram of the arc diagram of $h$. The fat paths correspond
to bundles of parallel paths (not necessarily oriented in the same direction). The vertical lines are connected with the rest of the arc diagram
of $h$. 

From the hypothesis $n=m$ as well as Lemma \ref{lem:half_lines}
it follows that the segments $\alpha$ and $\beta$ belong to the same path. Since there are no paths in the hatched area, $\alpha$ and $\beta$ contain adjacent arcs
contradicting Lemma \ref{lem:nonadjacency}.

The case $B=1$ is similar.
\end{proof}
\subsection{Arc diagrams associated to pairs}
We discuss the arc diagrams of subpairs of elements of \eqref{eq:tuples}. To this end we use Lemma \ref{lem:normalform} which asserts that they have the forms listed
in \eqref{eq:normal_form}. To understand what happens we give in Figure \ref{fig:firstfour} arc diagrams for four of the possibilities in \eqref{eq:normal_form}, i.e. 
$\sigma_i^m(Z(ApA^{-1},BqB^{-1})Z^{-1})$ for $m=0,\ldots,3$.
We use the same conventions as for Figure \ref{fig:nonecase}.
\begin{figure}
\pathszero\ \pathsone\ \pathstwo\ \pathsthree
\caption{}
\label{fig:firstfour}
\end{figure}
A clear  pattern emerges, and using some sketches the reader can verify from the explicit formulas in \eqref{eq:normal_form} that it persists for higher $m$.  
Similarly one checks that for negative~$m$ one obtains similar diagrams.
\section{Compatibility with the braid group action}
\subsection{Half twists}
\label{sec:halftwist}
Let $I\subset M$ be an interval in an oriented smooth surface $M$ with marked points $S$ such that  $I\cap S=\{p,q\}=\partial I$.
The corresponding \emph{half twist} $H_I:(M,S)\r (M,S)$ is defined (up to isotopy) as in Figure \ref{fig:halftwist}.
\begin{figure}
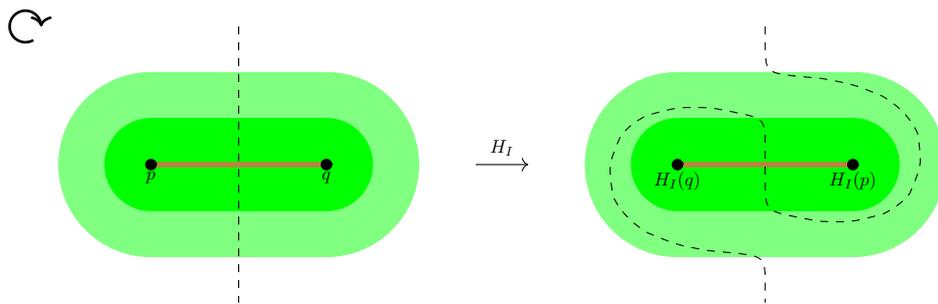

\halftwist
\caption{The half twist}
\label{fig:halftwist}
\end{figure}
The green areas are nested small thickening of the curve $I$, intersecting $S$ only in $p$, $q$. They are diffeomorphic to disks. In the dark green area the half twist is a rotation by $\pi$ in the direction of the orientation and in the white area it is the identity. The light green area interpolates between
these two extremes.
\begin{remark} Note that to define $H_I$ we do not have to choose an orientation on $I$.
\end{remark}

Assume that $\pi:M\r M$ is a diffeomorphism preserving $S$. Then it is easy to see that
\begin{equation}
\label{eq:conjHI}
\pi H_I \pi^{-1}=H_{\pi\circ I}.
\end{equation}
\subsection{The braid group and the mapping class group}
\label{sec:mapping}
We use the notations and conventions from \S\ref{sec:arcdiagram}. The marked points in $S$ will be denoted by $p_1,\ldots,p_n$ and they are assumed to be ordered by their
horizontal coordinate.
The mapping class group of $(\RR^2,S)$ will be the set of orientation preserving diffeomorphisms $\RR^2\r \RR^2$ which are the identity outside a compact set and which 
preserve $S$.
We will identify this mapping class group with the braid group~$B_n$ in such a way
that the generator $\sigma_k\in B_n$ corresponds to $H_{\Sigma_k}$ with $\Sigma_k$ being the interval
 connecting $p_k$ to $p_{k+1}$. With this convention $a_{ij}\in B_n$ (see \eqref{eq:aijalt}) is equal to $H_{A_{ij}}$ 
where $A_{ij}$ is the lower {arc} connecting $p_i$ to $p_j$ as in Figure \ref{fig:Aij}.
As usual we follow the convention $A_{ij}=A_{ji}$.
\subsection{Compatibility}
\begin{proposition}
\label{prop:compat}
Assume $m=n$. If $h$ is an element of \eqref{eq:tuples} and $P\subset \RR^2$ is its corresponding arc diagram then the diagram of $\sigma_i^{\pm 1} t$ is $\sigma_i^{\pm 1}P$. 
\end{proposition}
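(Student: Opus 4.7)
My proof plan would localize the problem to the subpair $(h_i,h_{i+1})$ and then reduce to a finite verification via the normal forms from Lemma~\ref{lem:normalform}.

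First, I would observe that the Hurwitz action of $\sigma_i^{\pm 1}$ modifies only the entries at positions $i$ and $i+1$ of $h$, while the topological half twist $\sigma_i^{\pm 1}=H_{\Sigma_i}$ is supported in a small neighborhood $U$ of $\Sigma_i=[p_i,p_{i+1}]$ and is the identity outside $U$. Hence outside $U$ the two diagrams (the arc diagram of $\sigma_i^{\pm 1}h$ and the image $\sigma_i^{\pm 1}P$) automatically agree up to isotopy, and the problem becomes a local one. Combined with Lemma~\ref{lem:red2} and the uniqueness statement Lemma~\ref{lem:uniqueness} (which applies precisely because $m=n$), it suffices to verify that the local picture near $\Sigma_i$ in $\sigma_i^{\pm 1}P$ coincides with the local picture for the arc diagram of the pair $\sigma_i^{\pm 1}(h_i,h_{i+1})$.

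Second, I would invoke Lemma~\ref{lem:normalform} to put the subpair $(h_i,h_{i+1})$ into one of the five shapes listed in \eqref{eq:normal_form}, indexed by a sign and a nonnegative integer $k$. Conjugation by the outer factor $Z$ commutes with the Hurwitz action on the pair and, on the topological side, corresponds to prefixing the half lines emanating from $p_i,p_{i+1}$ by a common tail of arcs corresponding to $Z$; this common tail is transparent to the half twist, so it may be stripped off and one may assume $Z=1$. This isolates the essential case.

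Third, I would verify the key step by direct inspection using the diagrams already produced in Figure~\ref{fig:firstfour}. These four pictures exhibit the arc diagrams attached to the successive normal forms obtained by applying $\sigma_i,\sigma_i^2,\sigma_i^3,\ldots$ to the base case $(ApA^{-1},BqB^{-1})$, and one sees that each consecutive picture is obtained from the previous by the clockwise half twist around $\Sigma_i$ (the red endpoints swap, the two bundles of parallel upper/lower arcs get pulled around one another, and the vertical exiting segments are rotated as prescribed by Figure~\ref{fig:halftwist}). Using \eqref{eq:conjHI} and Lemma~\ref{lem:uniqueness}, this pattern together with its mirror-image (for $\sigma_i^{-1}$ and negative exponents) handles all five cases of \eqref{eq:normal_form} uniformly via an induction on the parameter~$k$: the inductive step is precisely the passage from one picture to the next in Figure~\ref{fig:firstfour}.

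The main obstacle is the bookkeeping in step three: one must confirm that under the half twist the upper arcs (recording the cancellations in the reduction of $\tilde h$) and the lower arcs (recording the conjugators $A_i,A_{i+1}$) are rearranged in exactly the pattern dictated by the algebraic formula $\sigma_i(h_i,h_{i+1})=(h_{i+1},h_{i+1}^{-1}h_ih_{i+1})$. Here the non-self-adjacency lemma (Lemma~\ref{lem:nonadjacency}) and the uniqueness of the arc diagram (Lemma~\ref{lem:uniqueness}) are decisive: they force the output of the half twist to match the unique arc diagram of the next normal form, so no spurious rearrangements are possible.
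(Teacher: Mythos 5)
Your proposal follows essentially the same route as the paper: reduce to the subpair $(h_i,h_{i+1})$, put it into one of the normal forms \eqref{eq:normal_form} via Lemma \ref{lem:normalform}, and then check from the explicit arc diagrams of Figure \ref{fig:firstfour} (together with Remark \ref{rem:homotopy} and Lemma \ref{lem:uniqueness}) that the Hurwitz move and the half twist modify the diagram in the same way, the bundles $Z,A,B$ in those figures accounting for the strands of other colours threading through the twist region. Your preliminary localization step and the stripping of the outer factor $Z$ are harmless elaborations of what the paper handles implicitly through its fat-path conventions, so the core of the argument is the same.
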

\begin{proof} It follows from the discussion in \S\ref{sec:pairs} that $t$ and $P$ are modified in the same way by applying $\sigma_i^{\pm 1}$ (note that by Remark \ref{rem:homotopy} it is enough to know paths up to ambient isotopy in an arc diagram). 
\end{proof}
\begin{remark} Proposition \ref{prop:compat} is of course expected but it is not a complete tautology and the condition $h \in$ \eqref{eq:tuples} is necessary.  In Figure \ref{fig:pathsother} the arc diagram of $t_1:=(x_1,x_1x_2x_1^{-1})$
was given. Putting 
\[
t_2=\sigma_1^{-1} t_1=(x_1^2x_2x_1^{-2}, x_1)
\]
and 
\[
t_3=\sigma_1^{-1}t_2= (x_1^2x_2x_1x_2^{-1}x_1^{-2}, x_1^2x_2x_1^{-2})
\]
we see that the corresponding arc diagrams are as in Figure \ref{fig:nontautology}.
\begin{figure}
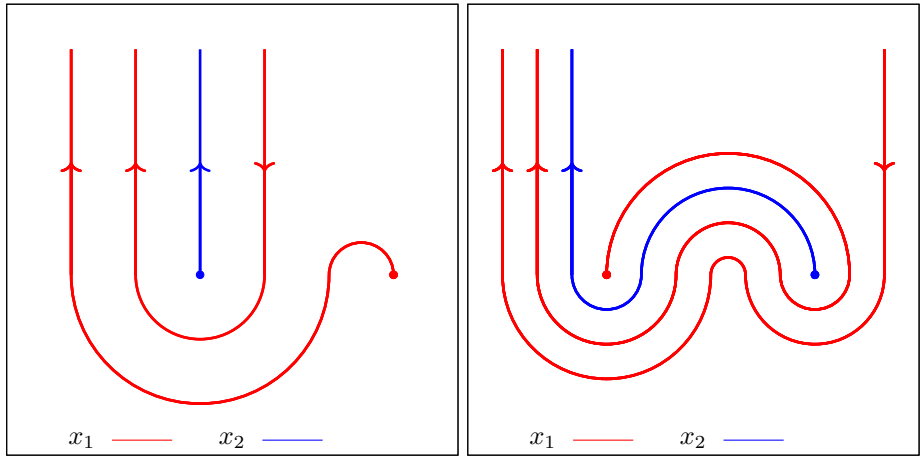

\pathsotherii \pathsotheriii
\caption{Non-compatibility with the braid group action}
\label{fig:nontautology}
\end{figure}
While the arc diagram of $t_2$ is indeed obtained by applying $\sigma_1^{-1}$ to
the arc diagram of $t_1$, we see that the arc diagram of $t_3$ is \emph{not} obtained by applying $\sigma_1^{-1}$ to
the arc diagram of~$t_2$.
\end{remark}
\def\triv{\operatorname{triv}}
\begin{corollary}
\label{cor:arc}
Assume $m=n$ and let $P_{\triv}$ be the diagram which is the union of the straight half lines, labeled by $x_i$, connecting $p_i$ to $\infty$ for $i=1,\ldots,n$.
 If $b\in B_n$ then the arc diagram of $b(x_1,\ldots,x_n)$ is
equal to $b(P_{\triv})$.
\end{corollary}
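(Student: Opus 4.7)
The plan is to deduce this immediately from Proposition \ref{prop:compat} by induction on the word length of $b$ in the standard generators $\sigma_i^{\pm 1}$, after first checking the base case.

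First, I would verify the base case $b = 1$: the tuple $(x_1, \ldots, x_n)$ corresponds to the concatenated word $\tilde t = x_1 x_2 \cdots x_n$, which is already reduced (no partners), and each entry is conjugated trivially ($A_i = 1$). Hence the construction of \S\ref{sec:arcdiagramfree} produces exactly $n$ vertical half lines from $p_i$ to $\infty$ labeled $x_i$, which is $P_{\triv}$ by definition.

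Next, to set up the induction, I would observe that $(x_1, \ldots, x_n)$ lies in the set \eqref{eq:tuples} (take $\tau = \mathrm{id}$; the product is $x_1 \cdots x_n$). By Theorem~\ref{th:mainth}\eqref{it:mainth1}, or more simply by the stability of \eqref{eq:tuples} under the $B_n$-action noted just after Lemma~\ref{lem:trivial_hurwitz}, every intermediate tuple $b'(x_1, \ldots, x_n)$ also lies in \eqref{eq:tuples}. This is exactly the hypothesis needed to invoke Proposition~\ref{prop:compat} at each step.

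Writing $b = \sigma_{i_k}^{\epsilon_k} \cdots \sigma_{i_1}^{\epsilon_1}$ and setting $b_j := \sigma_{i_j}^{\epsilon_j} \cdots \sigma_{i_1}^{\epsilon_1}$, I would prove by induction on $j$ that the arc diagram of $b_j(x_1, \ldots, x_n)$ equals $b_j(P_{\triv})$. The base case $j=0$ was handled above. For the inductive step, the tuple $b_{j-1}(x_1, \ldots, x_n)$ lies in \eqref{eq:tuples}, so Proposition~\ref{prop:compat} applies and yields that the arc diagram of $b_j(x_1, \ldots, x_n) = \sigma_{i_j}^{\epsilon_j}(b_{j-1}(x_1, \ldots, x_n))$ is $\sigma_{i_j}^{\epsilon_j}$ applied to the arc diagram of $b_{j-1}(x_1, \ldots, x_n)$, which by induction equals $\sigma_{i_j}^{\epsilon_j} b_{j-1}(P_{\triv}) = b_j(P_{\triv})$. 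Setting $j = k$ completes the proof. There is no real obstacle here; the only subtlety is ensuring that the hypothesis of Proposition~\ref{prop:compat} (membership in \eqref{eq:tuples}) is preserved along the induction, which is guaranteed by the $B_n$-stability of that set.
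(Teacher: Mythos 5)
Your proof is correct and is essentially the paper's argument: the paper's one-line proof ("this follows from Proposition \ref{prop:compat} since $P_{\triv}$ is the arc diagram of $(x_1,\ldots,x_n)$") implicitly contains exactly the induction on a word for $b$ in the $\sigma_i^{\pm 1}$ that you spell out. Your explicit check that membership in \eqref{eq:tuples} is preserved along the way (via the $B_n$-stability noted after Lemma \ref{lem:trivial_hurwitz}) is the right justification for invoking Proposition \ref{prop:compat} at each step, so nothing is missing.
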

\begin{proof} This follows Proposition \ref{prop:compat} since $P_{\triv}$ is the arc diagram corresponding to $(x_1,\ldots,x_n)$. 
\end{proof}
\begin{corollary}
Assume $m=n$. Every system of non-intersecting paths connecting $(p_i)_i$ to $\infty$ is the arc diagram of an element of \eqref{eq:tuples}.
\end{corollary}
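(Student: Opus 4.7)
The plan is to reduce the statement to a transitivity assertion for the $B_n$-action on labeled non-intersecting arc systems, and then invoke a standard surface-topology result. By Theorem \ref{th:mainth}\eqref{it:mainth1} (applied with $u_i = x_i$), every element of \eqref{eq:tuples} lies in the $B_n$-orbit of $(x_1,\ldots,x_n)$. By Corollary \ref{cor:arc}, the arc diagram of $b(x_1,\ldots,x_n)$ is $b(P_{\triv})$. Hence the set of arc diagrams of elements of \eqref{eq:tuples} equals $\{b(P_{\triv}) \mid b \in B_n\}$, and the corollary amounts to the claim that every labeled system $P$ of non-intersecting half lines from $(p_i)_i$ to $\infty$ is of the form $b(P_{\triv})$ for some braid $b$.

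First I handle the labels. Since the surjection $B_n \to S_n$ is realized by the standard generators, we may precompose with a braid that permutes the $p_i$ arbitrarily; this lets us reduce to the case where the path in $P$ labeled $x_i$ starts at $p_i$ for every $i$. It then remains to produce a \emph{pure} braid sending the straight vertical half line at $p_i$ to the $x_i$-labeled half line in $P$, for all $i$ simultaneously.

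For this I would argue by induction on $n$, peeling off one arc at a time. The base case $n=1$ is trivial since any half line from $p_1$ to $\infty$ is ambient-isotopic rel $p_1$ to the straight vertical half line. For the inductive step, single out one arc of $P$ that can be straightened without disturbing the combinatorial structure of the others — e.g.\ one whose exit ray to $\infty$ is the leftmost at infinity — construct by hand a diffeomorphism of $\RR^2$ (i.e.\ a pure braid) fixing $S$ pointwise that carries its straight counterpart in $P_{\triv}$ to it, and then apply the inductive hypothesis to the remaining $n-1$ arcs inside the disk cut off in the complement. Alternatively, one may give a one-shot argument by cutting $S^2 = \RR^2 \cup \{\infty\}$ along both $P_{\triv}$ and $P$: both complements are open disks with matching marked data on the boundary, so any diffeomorphism of disks realizing the prescribed matching glues back to the required braid.

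The main obstacle will be bookkeeping rather than conceptual: one must ensure throughout the inductive (or disk-cutting) argument that the diffeomorphisms produced are compactly supported, orientation-preserving, and respect the combinatorial data so that they descend to honest elements of $B_n$ inducing the desired permutation. Once this routine verification is carried out, combining the two reductions yields the desired braid $b$ with $b(P_{\triv})=P$, which is the claim.
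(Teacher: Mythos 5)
Your argument is correct and follows the route the paper intends: the corollary is left unproved there precisely because, as you observe, it reduces via Corollary \ref{cor:arc} and Theorem \ref{th:mainth}\eqref{it:mainth1} to realizing any labeled system of disjoint half lines as $b(P_{\triv})$ for some $b\in B_n$, which is the standard change-of-coordinates fact for arc systems in the marked plane. Your disk-cutting (or inductive straightening) justification of that fact is the usual one, and the remaining issues are, as you say, routine bookkeeping (compact support, labels, isotopy).
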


\section{Proof of Theorem \ref{th:mainth}\eqref{it:mainth2}}
\label{sec:theproof}
\subsection{The effect of half twists on paths}
\label{sec:effect}
Suppose $L_{1}, L_2\subset\RR^2$ are two non-intersecting half line paths in $\RR^2$, converging to a point at infinity.  Assume there are points $q_i\in L_i$ which are connected by an interval $I$ not intersecting $L_1\cup L_2$ anywhere else. Assume furthermore that the orientations are as in Figure \ref{fig:starthalftwist}.
\begin{figure}
\halftwistelementaryi
\caption{}
\label{fig:starthalftwist}
\end{figure}
I.e.\ if we walk on $I$ towards $L_i$ and then we walk on $L_i$ in the direction of the orientation of $L_i$ (i.e.\ towards $\infty$) then we perform a turn in the direction of the orientation of $\RR^2$ (clockwise by convention).

We may deform $I$ to a path $\tilde{I}$ connecting the starting points of $L_1$ and $L_2$ as in Figure \ref{fig:halftwistelementaryii}.
\begin{figure}
\halftwistelementaryii
\caption{}
\label{fig:halftwistelementaryii}
\end{figure}
We see that the effect of the half-twist $H_{\tilde{I}}$ consists in ``reconnecting'' the end points of $L_1$ and $L_2$ as in Figure \ref{fig:halftwistsub}.
\begin{figure}
\halftwistsub
\caption{}
\label{fig:halftwistsub}
\end{figure}
But now we see that we could have done this reconnection immediately in Figure \ref{fig:starthalftwist} without constructing $\tilde{I}$ first. I.e.\ like in Figure \ref{fig:halftwistdirect}.
\begin{figure}
\halftwistdirect
\caption{}
\label{fig:halftwistdirect}
\end{figure}

In Figure \ref{fig:starthalftwist} $I$ does not pass through the starting points of $L_1$, $L_2$. If it would e.g.\ pass through the starting point of $L_1$ then we
would get Figure \ref{fig:limiting} as a limiting case.
\begin{figure}
\halftwistdirecttwo
\caption{}
\label{fig:limiting}
\end{figure}

If instead of the orientation in Figure \ref{fig:starthalftwist} we start with the opposite orientation, as in Figure \ref{fig:halftwistelementaryiii}
\begin{figure}
\halftwistelementaryiii
\caption{}
\label{fig:halftwistelementaryiii}
\end{figure}
then we obtain by the analogous  procedure the effect of $H^{-1}_{\tilde{I}}$.
\subsection{The effect of half twists on collections of paths}
\label{sec:effectcollections}
We consider again $(\RR^2,p_1,\ldots,p_n)$ as a marked surface as in \S\ref{sec:mapping}. We let 
$P=(L_i)_{i=1,\ldots,n}$ be a collection of non-intersecting
paths in $\RR^2$ starting in $p_i$ and converging to $\infty$. We let $b\in B_n$ be such that $P=b(P_{\triv})$.

We first define a measure of distance between $P$ and the trivial collection~$P_{\triv}$. To do this we note that every $L_i$ can be drawn as a minimal collection of upper and lower {arc}s, as well as a single vertical half line.  See the
picture on the right in Figure \ref{fig:minimal}. 

We will also consider each $p_i$ that is not the end point of such an
{arc} as defining its own ``infinitesimal'' lower {arc} (i.e.\
with radius $0<\epsilon\ll 1$).  The lower
{arc}s, including the
infinitesimal ones, divide the region below the horizontal axis into a
number of regions (some infinitesimal). We define $c(P)=\sum_r c_r$
where~$r$ runs over these regions and where $c_r$ is the number of
{arc}s containing the region~$r$. In other words $c(P)$ keeps
track of how much ``overlapping'' there is among the lower {arc}s.  We
call $c(P)$ the \emph{complexity} of a minimal arc diagram $P$.  Clearly $c(P)=n$ if and only if $P$ is trivial.
\begin{example} Consider the arc diagram on the right in Figure \ref{fig:minimal}.  Its lower half consists of 6 nested {arc}s (one infinitesimal), defining 7 regions (one infinitesimal). 
The corresponding complexity is $0+1+2+3+4+5+6=21$.
\end{example}
We now show that the complexity of $P$ can be reduced by applying
suitable half twists. Assume there are two lower {arc}s which are
\emph{adjacent} in the sense that there is path $I$ entirely below the
horizontal axis which connects them, and which does not intersect any
other paths in $P$, as illustrated in Figure \ref{fig:adj1}. Let $\tilde{I}$ be a path obtained by deforming $I$ as in \ref{sec:effectcollections}. 
\begin{figure}
\adjacent
\caption{}
\label{fig:adj1}
\end{figure}
We have labeled the arcs with $p_i$, $p_j$ with the convention that the path labeled $p_i$ is starting in $b(p_i)$.  Observe that $\tilde{I}$ is connecting $b(p_i)$ and $b(p_j)$. In the picture we have chosen a particular orientation on the 
paths but if the orientations go in the opposite direction then the discussion below remains valid on condition that $H_{\tilde{I}}$ is replaced by $H^{-1}_{\tilde{I}}$.

As
explained in \S\ref{sec:effect} the result of applying $H_{\tilde{I}}$ to $P$ is as in Figure \ref{fig:adj2}.
\begin{figure}
\adjacenttwo
\caption{}
\label{fig:adj2}
\end{figure}
Since the {arc}s in Figure \ref{fig:adj2} overlap less than in Figure \ref{fig:adj1}, the complexity has been reduced. Note that in Figure \ref{fig:adj2} the {arc}s we have drawn do not 
necessarily correspond to a minimal representation of $H_{\tilde{I}}$, so the reduction in complexity may be even greater than what Figure \ref{fig:adj2} suggests.

In Figure \ref{fig:adj1} we have omitted the degenerate case where the blue {arc} is the point corresponding to $p_j$. This case is treated via Figure \ref{fig:limiting}. 
We leave it to the reader to check that the
conclusion is the same.

\medskip

Since $\RR^2-P$ is simply connected, $\tilde{I}$ is unique up to isotopy.  If follows that $\tilde{I}$ is 
isotopic to $b(A_{ij})$. From \eqref{eq:conjHI} we obtain
\[
H_{\tilde{I}}=b\circ H_{A_{ij}} \circ b^{-1}=b\circ a_{ij}\circ b^{-1}
\]
so that we obtain the formula
\[
H_{\tilde{I}}(P)=(b\circ a_{ij})(P_{\triv}).
\]

\subsection{The proof}
\label{ssec:theproof}
Let the notations be as in the statement of Theorem \ref{th:mainth}\eqref{it:mainth2} and let $b\in B_n$ be in the stabilizer of $u:=(u_1,\ldots,u_n)$. We need to prove
that $b\in B_{Q^u}$.  Let $(v_1,\ldots,v_n)$ be independent variables and put $F_n=\langle v_1,\ldots,v_n\rangle$ (thus in this context $m=n$).
Let $t_{\triv}:=(v_1,\ldots,v_n)$  and put $t=b(t_{\triv})$, $P=b(P_{\triv})$. Then according to Corollary \ref{cor:arc} $P$ is the arc diagram of $t$.
If $l(t)=n$ then $t=t_{\triv}$. It is well-known that the stabilizer of $t_{\triv}$ in $F_n^n$ is trivial (see e.g. \cite{MR2463428},  XI, Corollary 1.8). Thus we have $b=1$ and hence $b\in B_{Q^u}$.

Hence assume $l(t)>n$. We will now perform induction on $c(P)$ under this assumption. If $c(P)=n$ (the minimal value) then $P=P_{\triv}$ and hence $b=1$ which contradicts $l(t)>n$. So assume $c(P)>n$.
Since $l(b(u))=n$ the hypothesis $l(t)>n$ implies that there must be some cancellation in $t$ after performing the substitution $v_i\r u_i$. In other words:
some $t_k$ contains a subword of the form $v_i ^{\eta} v_j^{-\eta}$ with $u_i=u_j$ and 
$\eta\in \{\pm 1\}$.

This subword yields two adjacent lower {arc}s  with opposite orientation in the minimal representation of $P$, as in \S\ref{sec:effect}. It follows 
from the discussion in \S\ref{sec:effectcollections} 
 that for $b'=b\circ a_{ij}^\eta$, $P':=b'(P_{\triv})$ we have $c(P')<c(P)$.
If $l(b'(v))=n$ then $b'=1$ and hence $b=a_{ij}^{-\eta}\in B_{Q^u}$. If $l(b'(v))>n$ then by induction we may assume that $b'\in B_{Q^u}$ and we again conclude $b=b'\circ 
a_{ij}^{-\eta}\in B_{Q^u}$.
\section{Algebraic version of the algorithm}
The proof given in \S\ref{ssec:theproof} is in fact constructive and yields an algorithm to write an element $b$ in the stabilizer of $(u_1,\ldots,u_n)$ as a product
of $a_{ij}$ with $u_i=u_j$. 
It can be trivially implemented on a computer algebra system such as Sage \cite{Sage}.

To describe the algorithm concretely we first we need to do some preparatory work.
\subsection{More actions}
Let $G$ be a group. Recall that the Hurwitz action on $G^n$ is obtained by identifying
\begin{equation}
\label{eq:ident}
\Hom_{\text{Groups}}(F_n,G)\cong G^n
\end{equation}
and then using the $B_n$-action on $F_n=\langle x_1,\ldots,x_n\rangle$ via the formula
\begin{equation}
\label{eq:dotprod}
(b\cdot \phi)(w)=\phi(b^{-1}w).
\end{equation}
Assume $G=F_n$. Then we obtain a second  action of $B_n$ on $\Hom_{\text{Groups}}(F_n,F_n)$, denoted by $\bullet$, via
\begin{equation}
\label{eq:bulletprod}
(b\bullet \phi)(w)=b(\phi(w)).
\end{equation}
After the identification \eqref{eq:ident} we see that for $t\in F_n^n$ we may compute $b\bullet t$ as follows:
\begin{itemize}
\item Put $t_{\triv}=(x_1,\ldots,x_n)$ and compute $s=b^{-1}\cdot t_{\triv}$. 
\item Substitute $x_i=s_i$ in $t$.
\end{itemize}
In particular
\begin{equation}
\label{eq:inparticular}
b\bullet t_{\triv}=b^{-1}\cdot t_{\triv}.
\end{equation}
\subsection{Description of the algorithm}
\label{sec:algo}
Let the notations be as Theorem \ref{th:mainth}. As in~\S\ref{sec:theproof}, we consider tuples over $F_n:=\langle v_1,\ldots,v_n\rangle$ and we put $t_{\triv}=(v_1,\ldots,v_n)$.

Let $b$ be in the stabilizer of $(u_1,\ldots,u_n)$. We will produce a sequence of elements
$(a^{\gamma_1}_{i_tj_t},\ldots,a^{\gamma_t}_{i_1j_1})$ 
with $\gamma_{i}\in \{\pm 1\}$ such
that
\[
b=a^{\gamma_1}_{i_tj_t}\cdots a^{\gamma_t}_{i_1j_1}.
\]
To do this we keep track of a tuple $t$ which is initialized as $t:=b(t_{\triv})$
as well as a partial sequence
$l=(a^{\gamma_s}_{i_sj_s},\ldots,a^{\gamma_t}_{i_1j_1})$ which is initialized as $()$.
We now look for some $t_k$ containing a subword of the form $v_i ^\eta v_j^{-\eta}$ with $u_i=u_j$ and $\eta\in\{\pm 1\}$.
We put $t:=a_{ij}^{-\eta}\bullet t$ 
and we prepend $a_{ij}^{-\eta}$ to~$l$. We repeat this computation until 
$t=t_{\triv}$.
\begin{example} We place ourselves in the context of Example \ref{ex:usual}.
Thus $n=4$ and $u=(x,y,x,y)$. Put $t_{\triv}=(x_1,y_2,x_3,y_4)$. Let
\begin{equation}
\label{eq:bfirst}
b=a_{24}a_{13}^{-1}.
\end{equation}
Then
\[
t=b\cdot t_{\triv}=(x_1x_3x_1^{-1}, y_4, y_4^{-1}x_1x_3^{-1}y_2x_3x_1x_3^{-1}y_2^{-1}x_3x_1^{-1}y_4, y_4^{-1}x_1x_3^{-1}y_2x_3x_1^{-1}y_4).
\]
The first subword we consider is
\[
x_3x_1^{-1}
\]
which yields $l:=(a_{13}^{-1})$ (recall $a_{31}=a_{13}$). 
We also get
\[
t:=a_{13}^{-1}\bullet t=
(x_1, y_4, y_4^{-1}y_2x_3y_2^{-1}y_4, y_4^{-1}y_2y_4).
\]
Now we consider the subword
\[
y_4^{-1}y_2
\]
which yields $l:=(a_{24},a_{13}^{-1})$.
We now get
\[
t:=a_{24}\bullet t=(x_1, y_2, x_3, y_4).
\]
Since $t=t_{\triv}$ the algorithm stops and we have indeed recovered \eqref{eq:bfirst}.
\end{example}

\subsection{Explanation why the algorithm works}
 We make \S\ref{ssec:theproof} explicit. We have
\begin{align*}
t'&=(b\circ a^\eta_{ij})(t_{\triv})\\
&=(b\circ a^\eta_{ij})^{-1}\bullet t_{\triv}&\text{(by \eqref{eq:inparticular})}\\
&=a^{-\eta}_{ij}\bullet (b^{-1}\bullet t_{\triv})\\
&=a^{-\eta}_{ij}\bullet t &\text{(by \eqref{eq:inparticular})}.
\end{align*}
If we repeat the procedure $t\r t'$ we construct a sequence  $(a^{-\eta_t}_{i_tj_t},\ldots,a^{-\eta_1}_{i_1j_1})$ 
such that
\begin{align*}
t_{\triv}&=(a^{-\eta_t}_{i_tj_t}\cdots a^{-\eta_1}_{i_1j_1})\bullet t\\
&=(a^{-\eta_t}_{i_tj_t}\cdots a^{-\eta_1}_{i_1j_1})\bullet (b^{-1}\bullet t_{\triv})\\
&=(a^{-\eta_t}_{i_tj_t}\cdots a^{-\eta_1}_{i_1j_1}b^{-1})\bullet t_{\triv}
\end{align*}
from which we obtain
\[
b=a^{-\eta_t}_{i_tj_t}\cdots a^{-\eta_1}_{i_1j_1}.
\]

%\bibliography{bibs} \bibliographystyle{amsplain}

\begin{thebibliography}{1}

\bibitem{Artin}
E.~Artin, \emph{Theory of braids}, Annals of Mathematics \textbf{48} (1947),
  no.~1, 101--126.

\bibitem{BKL}
Joan Birman, Ki~Hyoung Ko, and Sang~Jin Lee, \emph{A new approach to the word
  and conjugacy problems in the braid groups}, Advances in Mathematics
  \textbf{139} (1998), no.~2, 322--353.

\bibitem{MR992977}
Alexey Bondal, \emph{Representations of associative algebras and coherent
  sheaves}, Izv. Akad. Nauk SSSR Ser. Mat. \textbf{53} (1989), no.~1, 25--44.

\bibitem{MR2463428}
Patrick Dehornoy, Ivan Dynnikov, Dale Rolfsen, and Bert Wiest, \emph{Ordering
  braids}, Mathematical Surveys and Monographs, vol. 148, American Mathematical
  Society, Providence, RI, 2008.

\bibitem{Sage}
The~SageMath Developers, \emph{Sagemath}, 2024, SageMath is a free open-source
  mathematics software system.

\bibitem{MR2850125}
Benson Farb and Dan Margalit, \emph{A primer on mapping class groups},
  Princeton Mathematical Series, vol.~49, Princeton University Press,
  Princeton, NJ, 2012.

\bibitem{Godelle}
Eddy Godelle, \emph{Parabolic subgroups of {G}arside groups}, J. Algebra
  \textbf{317} (2007), no.~1, 1--16.

\bibitem{MR2103472}
Juan Gonz\'alez-Meneses and Bert Wiest, \emph{On the structure of the
  centralizer of a braid}, Ann. Sci. \'Ecole Norm. Sup. (4) \textbf{37} (2004),
  no.~5, 729--757.

\end{thebibliography}

\providecommand{\bysame}{\leavevmode\hbox to3em{\hrulefill}\thinspace}
\providecommand{\MR}{\relax\ifhmode\unskip\space\fi MR }
% \MRhref is called by the amsart/book/proc definition of \MR.
\providecommand{\MRhref}[2]{%
  \href{http://www.ams.org/mathscinet-getitem?mr=#1}{#2}
}
\providecommand{\href}[2]{#2}

\end{document}